\numberwithin{equation}{section}
\numberwithin{figure}{section}
\theoremstyle{plain}
\newtheorem{thm}{\protect\theoremname}[section]
  \theoremstyle{definition}
  \newtheorem{defn}[thm]{\protect\definitionname}
  \theoremstyle{plain}
  \newtheorem{prop}[thm]{\protect\propositionname}
  \theoremstyle{remark}
  \newtheorem{rem}[thm]{\protect\remarkname}
  \theoremstyle{plain}
  \theoremstyle{plain}
  \newtheorem{cor}[thm]{\protect\corollaryname}
  \theoremstyle{plain}
  \newtheorem{lem}[thm]{\protect\lemmaname}
  \providecommand{\corollaryname}{Corollary}
  \providecommand{\definitionname}{Definition}
  \providecommand{\factname}{Fact}
  \providecommand{\lemmaname}{Lemma}
  \providecommand{\propositionname}{Proposition}
  \providecommand{\remarkname}{Remark}
\providecommand{\theoremname}{Theorem}
\begin{document}

\title{Principal Bundles over Statistical Manifolds}

\thanks{This subject is supported by the National
Natural Science Foundations of China (No. 61179031, No. 10932002.)
}

\author[D. Li]{Didong Li}
\address{School of Mathematics and Statistics, Beijing Institute
              of Technology, Beijing 100081, P. R. China}
\email{lididong@gmail.com}

\author[H. Sun]{Huafei Sun}
\address{School of Mathematics and Statistics, Beijing Institute
              of Technology, Beijing 100081, P. R. China}
\email{huafeisun@bit.edu.cn}

\thanks{The second author is the corresponding author}

\author[C. Tao]{Chen Tao}
\address{School of Mathematics and Statistics, Beijing Institute
              of Technology, Beijing 100081, P. R. China}
\email{matheart@gmail.com}

\author[L. Jiu]{Lin JIu}
\address{Department of Mathematics, Tulane University, New Orleans, LA 70118, U.S.A. }
\email{ljiu@tulane.edu}
\begin{abstract}
In this paper, we introduce the concept of principal bundles on statistical
manifolds. After necessary preliminaries on information geometry and
principal bundles on manifolds, we study the $\alpha$-structure of
frame bundles over statistical manifolds with respect to $\alpha$-connections,
by giving geometric structures. The manifold of one-dimensional normal
distributions appears in the end as an application and a concrete
example.
\end{abstract}

\keywords{Information Geometry, Principal Bundles, $\alpha$-Structures, Normal
Distribution}

\subjclass[2000]{53C05 }

\maketitle

\section{Introduction}

The recognition of fibre bundles took place in the period 1935-1940.
After the first definition by H.Whitney, the theory of fibre bundles
has been developed by many mathematicians such as H.Hopf, E.Stiefel
and N.Steenrod({[}6{]}). Nowadays, the theory of fibre bundles, especially
(differentiable) principal bundles, plays an important role in many
fields such as differential geometry, algebraic topology, etc. As
an extraordinary example, the proof of Gauss-Bonnet-Chern formula({[}4{]}),
which lays the foundation of global differential geometry, by S.S.Chern
through a global approach, involves principal bundles and connections
in the key step. In particular, since the concept of connections is
of great importance in differential geometry, hence connections on
principal bundles attract much attention. From then on, increasing
concerns have been focused on the theory of fibre bundles and connections
on principal bundles. Section 3 introduces basic results on principal
bundles and Section 4 concentrates on the corresponding geometric
stuctures.

Applying differential geometry on probability and statistics, S.Amari
initiated the theory of information geometry({[}1,2{]}) working over
statistical manifolds, which are manifolds consisting of probability
density functions. A series of concepts such as $\alpha$-connections,
dual connections and Fisher metrics are introduced and studied. Surprisingly,
Amari found that the sectional curvature of the manifold consisting
of normal distributions is $-\frac{1}{2}$, which not only implies
that the manifold is isometric to a hyperbolic space, but also makes
it an important example. From then on, the theory of information geometry
has been developed and applied in many other fields besides mathematics.
The backgroup of information geometry is included in Section 2.

After reviewing some basic concepts through Sections 2, 3 and 4, without
providing proofs since they can be found from lots of references such
as {[}1-3{]} and {[}5{]}, in Section 5, we give the $\alpha$-structure on frame
bundles, which are certainly principal bundles, over statistical manifolds,
in terms of Theorem 5.8 and Corollary 5.10. It turns
out that the $\alpha$-structures on frame bundles of statistical
manifolds are always easier to handle due to the linear structure
on the matrix Lie group $GL(n,\mathbb{R})$. In the end, Section 6
discusses the $\alpha$-structures over manifold of normal distributions
as both an application on concrete case and a verification of results
in Section 5.

\section{Information Geometry on Statistical Manifolds}

We call
\[
S:=\left\{ p\left(x;\theta\right)|\theta\in\Theta\right\}
\]
a statistical manifold if $x$ is a random variable in sample space
$X$ and $p\left(x;\theta\right)$ is the probability density function,
which satisfies certain regular conditions. Here, $\theta=\left(\theta_{1},\theta_{2},\dots,\theta_{n}\right)\in\Theta$
is an $n$-dimensional vector in some open subset $\Theta\subset\mathbb{R}^{n}$
and $\theta$ can be viewed as the coordinates on manifold $S$.
\begin{defn}
The Riemannian metric on statistical manifolds is defined by the Fisher
information matrix:
\[
g_{ij}(\theta):=E[(\partial_{i}l)(\partial_{j}l)]=\int(\partial_{i}l)(\partial_{j}l)p(x;\theta)dx,\ \ i,j=1,2,\ldots,n,
\]
where $E$ denotes the expectation, $\partial_{i}:=\frac{\partial}{\partial\theta_{i}}$,
and $l:=l(x;\theta)=\log p(x;\theta)$.
\end{defn}

\begin{defn}
A family of connections $\nabla^{(\alpha)}$ defined (by S.Amari)
as follows
\[
<\nabla_{A}^{(\alpha)}B,C>:=E[(ABl)(Cl)]+\frac{1-\alpha}{2}E[(Al)(Bl)(Cl)]
\]
are called $\alpha$-connections, where $A,B,C\in\mathfrak{X}(S)$,
$ABl=A(Bl)$, and $\alpha\in\mathbb{R}$ is the parameter. \end{defn}
\begin{rem}
$\nabla^{\left(\alpha\right)}$ is not usually compatible with the
metric but always torsion free. Any connection $\nabla$ is called torsion free if for any vector
fields $X$ and $Y$,
\[
\nabla_{X}Y-\nabla_{Y}X-\left[X,Y\right]=0,
\]
where $[\cdot,\cdot]$ denotes the Lie bracket.\end{rem}
\begin{thm}
If the Riemannian connection coefficients and $\alpha$-connection
coefficients are denoted by $\Gamma_{ijk}$ and $\Gamma_{ijk}^{(\alpha)}$,
respectively, then
\[
\Gamma_{ijk}^{(\alpha)}=\Gamma_{ijk}-\frac{\alpha}{2}T_{ijk},
\]
where $T_{ijk}:=E[(\partial_{i}l)(\partial_{j}l)(\partial_{k}l)]$.
Note that $\Gamma_{ijk}^{(0)}=\Gamma_{ijk}$.\end{thm}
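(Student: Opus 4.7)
The plan is to read off $\Gamma^{(\alpha)}_{ijk}$ directly from Definition 2.2 applied to the coordinate vector fields, and then identify the $\alpha=0$ case with the Riemannian connection coefficients of the Fisher metric via the uniqueness of the Levi-Civita connection.

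First I would substitute $A=\partial_i$, $B=\partial_j$, $C=\partial_k$ in the defining formula
$$\langle\nabla^{(\alpha)}_A B,C\rangle = E[(ABl)(Cl)] + \frac{1-\alpha}{2}E[(Al)(Bl)(Cl)].$$
Since $\partial_i\partial_j l = \partial_j\partial_i l$ and $T_{ijk}$ is manifestly symmetric in all three indices, this yields
$$\Gamma^{(\alpha)}_{ijk} = E[(\partial_i\partial_j l)(\partial_k l)] + \frac{1-\alpha}{2}T_{ijk}.$$
Subtracting the $\alpha=0$ specialization from this general expression immediately gives $\Gamma^{(\alpha)}_{ijk} - \Gamma^{(0)}_{ijk} = -\frac{\alpha}{2}T_{ijk}$, so the theorem reduces to the claim that $\Gamma^{(0)}_{ijk}$ equals the Levi-Civita coefficients $\Gamma_{ijk}$.

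To establish this, I would show that $\nabla^{(0)}$ is in fact the Levi-Civita connection of the Fisher metric $g$. Torsion-freeness is already noted in Remark 2.3, so only metric compatibility needs to be checked. Differentiating $g_{ij} = \int (\partial_i l)(\partial_j l)\,p\,dx$ under the integral sign and using the identity $\partial_k p = p\,\partial_k l$, one obtains
$$\partial_k g_{ij} = E[(\partial_k\partial_i l)(\partial_j l)] + E[(\partial_i l)(\partial_k\partial_j l)] + T_{ijk}.$$
Using the formula above for $\Gamma^{(0)}$ together with the full symmetry of $T$, a short comparison shows this equals $\Gamma^{(0)}_{kij} + \Gamma^{(0)}_{kji}$, which is exactly the compatibility condition. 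By the fundamental theorem of Riemannian geometry, $\nabla^{(0)}$ coincides with the Levi-Civita connection, yielding $\Gamma^{(0)}_{ijk}=\Gamma_{ijk}$.

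The only technical subtlety is the interchange of differentiation and integration used in computing $\partial_k g_{ij}$, which is absorbed into the \emph{certain regular conditions} on the family $p(x;\theta)$ mentioned in Section 2 (standard dominated-convergence hypotheses on $p$ and its logarithmic derivatives). With that justification in hand, the whole argument is a brief computation, and the uniqueness of the Levi-Civita connection closes the proof.
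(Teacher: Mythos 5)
Your argument is correct, but there is nothing in the paper to compare it with: this theorem sits in the preliminary Section 2, and the authors explicitly omit all proofs there, deferring to the references (Amari's books). Judged on its own, your proof is sound. The substitution $A=\partial_i$, $B=\partial_j$, $C=\partial_k$ in Definition 2.2 gives $\Gamma^{(\alpha)}_{ijk}=E[(\partial_i\partial_j l)(\partial_k l)]+\frac{1-\alpha}{2}T_{ijk}$, so the whole content is indeed the identification $\Gamma^{(0)}_{ijk}=\Gamma_{ijk}$. Your verification of metric compatibility is right: differentiating under the integral and using $\partial_k p=p\,\partial_k l$ gives $\partial_k g_{ij}=E[(\partial_k\partial_i l)(\partial_j l)]+E[(\partial_i l)(\partial_k\partial_j l)]+T_{ijk}$, which matches $\Gamma^{(0)}_{kij}+\Gamma^{(0)}_{kji}$ by the symmetry of $T$; combined with torsion-freeness (which follows from the symmetry of $\Gamma^{(\alpha)}_{ijk}$ in $i,j$, as the paper's Remark 2.3 records) and uniqueness of the Levi-Civita connection, this closes the argument. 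The standard textbook route is marginally more computational: one inserts the same expression for $\partial_k g_{ij}$ into the Christoffel formula $\Gamma_{ijk}=\frac{1}{2}(\partial_i g_{jk}+\partial_j g_{ik}-\partial_k g_{ij})$ and simplifies to get $\Gamma_{ijk}=E[(\partial_i\partial_j l)(\partial_k l)]+\frac{1}{2}T_{ijk}$ directly, avoiding the appeal to the fundamental theorem of Riemannian geometry; your version buys a cleaner conceptual statement ($\nabla^{(0)}$ \emph{is} the Levi-Civita connection) at the cost of invoking that uniqueness theorem. Your remark on the regularity needed to differentiate under the integral is appropriate and consistent with the paper's blanket ``regular conditions.''
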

\begin{defn}
The Riemannian curvature tensor of $\alpha$-connections is defined
by (using Einstein summation convention)
\[
R_{ijkl}^{(\alpha)}=(\partial_{j}\Gamma_{ik}^{(\alpha)s}-\partial_{i}\Gamma_{jk}^{(\alpha)s})+(\Gamma_{jtl}^{(\alpha)}\Gamma_{ik}^{(\alpha)t}-\Gamma_{itl}^{(\alpha)}\Gamma_{jk}^{(\alpha)t}),
\]
where $\Gamma_{jk}^{(\alpha)s}=\Gamma_{jki}^{(\alpha)}g^{is}$ and
$\left(g^{is}\right)$ is the inverse matrix of the metric matrix
$\left(g_{mn}\right)$.
\end{defn}

\begin{defn}
We call the statistical manifold $S$ $\alpha$-flat if $R_{ijkl}^{(\alpha)}=0$
holds in some open set, and the coordinates $\theta$ $\alpha$-affine
if $\Gamma_{ijk}^{(\alpha)}=0$ in some open set.
\end{defn}

\begin{defn}
A (piecewise) smooth curve $\gamma:\left[0,1\right]\rightarrow S$
on $S$ is called an $\alpha$-geodesic if
\[
\nabla_{\gamma'\left(t\right)}^{\left(\alpha\right)}\gamma'\left(t\right)=0.
\]

\end{defn}

\section{Principal Bundles}
\begin{defn}
Suppose that $P$, $M$, and $G$ are all smooth manifolds, where
$G$ is also a (right) Lie transformation group on $P$ and $\pi:P\rightarrow M$
is a smooth surjection. $(P,\pi,M,G)$ is called a principal (differentiable)
(fibre) bundle if the following are true.\\
(1) The action of $G$ on $P$ is free, i.e., if $ug=u$, $\forall u\in P$, then $g$ is the identity in $G$;\\
(2) $\pi^{-1}(\pi(p))=pG:=\{pg|g\in G\}$, $\forall p\in P$;\\
(3) $\forall x\in M$, there exist $U\in\mathcal{N}(x):=\{U|x\in U,\ U\text{\ is\ an\ open\ set\ in}\ M\}$
and a diffeomorphism $\Phi_{U}:\pi^{-1}(U)\rightarrow U\times G$,
where $\Phi_{U}$ has two components, i.e, $\Phi_{U}=(\pi,\phi_{U})$,
s.t. $\phi_{U}:\pi^{-1}(U)\rightarrow G$ satisfying
\[
\phi_{U}(pg)=\phi_{U}(p)g,\ p\in P,\ g\in G.
\]

\end{defn}
$G$ is called the structure group of principal bundle $P$ and the
pair $(\pi^{-1}(U),\ \Phi_{U})$ is called the local trivialization.
\begin{defn}
Suppose that $(P,\pi,M,G)$ is a principal bundle, $(\pi^{-1}(U),\ \Phi_{U})$
and $(\pi^{-1}(V),\ \Phi_{V})$ are two local trivializations.
\begin{eqnarray*}
g_{UV}:U\cap V & \rightarrow & G,\\
x & \mapsto & \phi_{U}(p)(\phi_{V}(p))^{-1},\, p\in\pi^{-1}(x)
\end{eqnarray*}
is called the transition function between $(\pi^{-1}(U),\ \Phi_{U})$
and $(\pi^{-1}(V),\ \Phi_{V})$.
\end{defn}

\begin{defn}
$(F(E),\widetilde{\pi},M,GL(r;\mathbb{R}))$ is called the frame bundle
associated with vector bundle $(E,\pi,M,\mathbb{R}^{r},GL(r;\mathbb{R}))$.
In particular, when $E=TM$, the tangent bundle of manifold $M$,
$F(M):=F(TM)$ is called the frame bundle of manifold $M$.
\end{defn}
Frame bundle is one of the most important types of principal bundles
because of its various useful structures. Some results hold only on
frame bundles rather than on general principal bundles. Amazingly,
the transition functions of frame bundle are quite nature: the Jacobi
matrix, as stated in the next theorem.
\begin{thm}
$(F(E),\widetilde{\pi},M,GL(r;\mathbb{R}))$ and $(E,\pi,M,\mathbb{R}^{r},GL(r;\mathbb{R}))$
have the same family of transition functions. In particular, the common
transition functions of $(F(M),\widetilde{\pi},M,GL(n;\mathbb{R}))$
and $(TM,\pi,M,\mathbb{R}^{n},GL(n;\mathbb{R}))$ are the Jacobian
matrix of coordinates transitions: $(g_{\alpha\beta}(x))_{ij}=(\frac{\partial x_{\alpha}^{i}}{\partial x_{\beta}^{j}})$.\end{thm}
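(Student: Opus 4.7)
The plan is to build a natural local trivialization of $F(E)$ directly from one of $E$, and then match transition functions fibre by fibre via elementary linear algebra.

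First, I would fix a local trivialization $(\pi^{-1}(U),\Phi_{U})$ of the vector bundle $E$ with linear component $\phi_{U}|_{E_{x}}\colon E_{x}\to\mathbb{R}^{r}$. Pulling back the standard basis of $\mathbb{R}^{r}$ through $\phi_{U}|_{E_{x}}$ produces a local frame $\{s_{1}^{U},\ldots,s_{r}^{U}\}$ over $U$. Given any point $x\in U$ and any frame $u=(u_{1},\ldots,u_{r})$ over $x$, there is a unique matrix $A_{U}=(A_{U}^{ik})\in GL(r;\mathbb{R})$ with $u_{k}=A_{U}^{ik}s_{i}^{U}(x)$. Define
\[
\widetilde{\Phi}_{U}\colon\widetilde{\pi}^{-1}(U)\to U\times GL(r;\mathbb{R}),\qquad u\mapsto(x,A_{U}).
\]
A short check shows that $\widetilde{\Phi}_{U}$ is a diffeomorphism and that $\widetilde{\phi}_{U}(ug)=A_{U}g$ for $g\in GL(r;\mathbb{R})$, so it is a legitimate principal-bundle trivialization in the sense of Definition~3.1.

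Next I would compute the transition function on an overlap $U\cap V$. Expanding the frame $s_{j}^{V}$ in the frame $s_{i}^{U}$ gives $s_{j}^{V}(x)=(g_{UV}^{E}(x))_{ij}\,s_{i}^{U}(x)$, where $g_{UV}^{E}$ is the transition function of the vector bundle $E$ (this is just the statement that changing trivialization corresponds to multiplying vector components by $g_{UV}^{E}$). Substituting into $u_{k}=A_{V}^{jk}s_{j}^{V}(x)$ and comparing with $u_{k}=A_{U}^{ik}s_{i}^{U}(x)$ yields $A_{U}=g_{UV}^{E}(x)\cdot A_{V}$, i.e.
\[
\widetilde{\phi}_{U}(u)=g_{UV}^{E}(x)\cdot\widetilde{\phi}_{V}(u).
\]
By the formula in Definition~3.2 this is exactly $\widetilde{g}_{UV}(x)=g_{UV}^{E}(x)$, which is the first assertion of the theorem.

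For the special case $E=TM$, I would specialize the construction to the coordinate charts $(U,x_{\alpha}^{i})$ and $(V,x_{\beta}^{j})$, whose natural trivializations use the coordinate frames $\{\partial/\partial x_{\alpha}^{i}\}$ and $\{\partial/\partial x_{\beta}^{j}\}$. The chain rule gives $\partial/\partial x_{\beta}^{j}=(\partial x_{\alpha}^{i}/\partial x_{\beta}^{j})\,\partial/\partial x_{\alpha}^{i}$, which is precisely the change-of-frame matrix identified in the previous step. Hence $(g_{\alpha\beta}(x))_{ij}=\partial x_{\alpha}^{i}/\partial x_{\beta}^{j}$, finishing the proof.

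I do not expect a genuine obstacle here: once the induced trivialization of $F(E)$ is written down correctly, the result is linear algebra. The only point requiring care is index placement (row vs.\ column, and the order of multiplication) so that the identity $A_{U}=g_{UV}^{E}A_{V}$ appears with the factors in the correct order and is compatible with the right $GL(r;\mathbb{R})$-action used in Definition~3.1; getting this right is what ensures the equality of transition functions rather than its inverse.
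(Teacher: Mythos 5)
Your proposal is correct: the induced trivialization $\widetilde{\Phi}_{U}$ of $F(E)$ built from a local frame of $E$, the equivariance check $\widetilde{\phi}_{U}(ug)=\widetilde{\phi}_{U}(u)g$, and the change-of-frame computation $A_{U}=g_{UV}^{E}(x)A_{V}$ giving $\widetilde{\phi}_{U}(u)(\widetilde{\phi}_{V}(u))^{-1}=g_{UV}^{E}(x)$ constitute the standard argument, and the chain-rule specialization to $E=TM$ correctly identifies the Jacobian $(\partial x_{\alpha}^{i}/\partial x_{\beta}^{j})$. The paper itself states this theorem as background without proof (deferring to its references), so there is no in-paper argument to compare against; your route is the expected one, with the only delicate point being exactly the order-of-multiplication issue you flag, which you resolve correctly.
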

\begin{defn}
Let $(P,\pi,M,G)$ be a principal bundle.
\[
V_{p}:=\ker\pi_{*}=\{X\in T_{p}P|\pi_{*}(X)=0\}
\]
 is called the vertical subspace of $T_{p}P$.
\end{defn}

\begin{defn}
For principal bundle $(P,\pi,M,G)$, $H\subset TP$ is called a connection
on $P$ if\\
(1) $T_{p}P=V_{p}\oplus H_{p},\ p\in P$;\\
(2) $(R_{g})_{*p}(H_{p})=H_{pg},\ p\in P,g\in G$;\\
(3) and $\forall X\in\mathfrak{X}(P)$, its projections to $V$ and
$H$: $v(X)$ and $h(X)$, are both smooth.
\end{defn}
In other words, a connection $H$ is a smooth decomposition of tangent
spaces on $P$: vertical subspace $V$ and horizontal subspace $H$,
where the latter is right-invariant.

\begin{defn}
Let $(P,\pi,M,G)$ be a principal bundle, and $\mathfrak{g}$ be the
Lie algebra of structure group $G$.
\begin{eqnarray*}
\tau:\mathfrak{g} & \rightarrow & \mathfrak{X}(P),\\
A & \mapsto & \tau(A),\,\tau(A)(p):=(R_{p})_{*e}(A)
\end{eqnarray*}
is called the fundamental vector field induced by $A$, where $R_{p}:G\rightarrow\pi^{-1}(\pi(p)),$
and $R_{p}(g):=R(p,g)=p\cdot g\in\pi^{-1}(\pi(p))$.
\end{defn}
Obviously, the set of all fundamental vector fields is a Lie algebra
isomorphic to $\mathfrak{g}$.
\begin{defn}
Let $(P,\pi,M,G)$ be a principal bundle, and $\mathfrak{g}$ be the
Lie algebra $G$. $\theta:\mathfrak{X}(G)\rightarrow\mathfrak{g}$,
defined by
\[
\theta(X_{g}):=L_{g*}^{-1}(X_{g}),\ X_{g}\in TgG
\]
is called the canonical 1-form on $G$. Furthermore, let $g_{\alpha\beta}$
be the transition functions,
\[
\theta_{\alpha\beta}:\mathfrak{X}(U_{\alpha}\cap U_{\beta})\rightarrow\mathfrak{g}
\]
is given by
\[
\theta_{\alpha\beta}:=g_{\alpha\beta}^{*}\theta,
\]
that is, $\theta_{\alpha\beta}$ is a $\mathfrak{g}$-valued-1-form
on $M$, defined as the pull-back $\mathfrak{g}$-valued-1-form
of $\theta$ on $G$ by $g_{\alpha\beta}$.\end{defn}
\begin{thm}
Suppose that $(P,\pi,M,G)$ is a principal bundle, and $\mathfrak{g}$
is the Lie algebra $G$. The following definitions of connections
are equivalent:\\
\underline{Definition 1}. A connection on $P$ is a smooth $M$-distribution
$H\subset TP$ s.t.\\
(1) $T_{p}P=V_{p}\oplus H_{p},\ p\in P,$\\
(2) $(R_{g})_{*}(H_{p})=H_{pg},\ p\in P,g\in G.$\\
\underline{Definition 2}. A connection on $P$ is a smooth $\mathfrak{g}$-valued-1-form
field $\omega$ on $P$ s.t.\\
(3) $\omega(\tau(A))=A,\ A\in\mathfrak{g,}$\\
(4) $R_{g}^{*}(\omega(X))=Ad_{g^{-1}}(\omega(X)),\ g\in G,X\in TP.$\\
\underline{Definition 3}. A connection on $P$ is a family of smooth
$\mathfrak{g}$-valued-1-form fields $\omega_{\alpha}$ on $U_{\alpha}$
s.t.\\
(5) $\omega_{\beta}(p)=Ad(g_{\alpha\beta}^{-1}(p))\circ\omega_{\alpha}(p)+\theta_{\alpha\beta}(p),\ p\in U_{\alpha}\cap U_{\beta}$. \end{thm}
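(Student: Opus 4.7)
The plan is to establish the cycle $(1)\Rightarrow(2)\Rightarrow(3)\Rightarrow(1)$, although it will be more transparent to prove $(1)\Leftrightarrow(2)$ and $(2)\Leftrightarrow(3)$ separately, since each equivalence rests on a single essentially independent idea.

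For $(1)\Leftrightarrow(2)$, the key observation is that the fundamental vector field map $\tau:\mathfrak{g}\to V_{p}$ is a linear isomorphism at every $p\in P$ (this is immediate from freeness of the $G$-action and dimension count). Given a horizontal distribution $H$ as in Definition 1, every $X\in T_{p}P$ decomposes uniquely as $X=v(X)+h(X)$, and I would define $\omega(X):=\tau^{-1}(v(X))$. Property (3) is then tautological, and property (4) will follow from combining the right-invariance $(R_{g})_{*}(H_{p})=H_{pg}$ with the transformation law $(R_{g})_{*}\tau(A)=\tau(\mathrm{Ad}_{g^{-1}}A)$ for fundamental vector fields. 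Conversely, given $\omega$, set $H_{p}:=\ker\omega_{p}$; property (3) forces $\omega|_{V_{p}}$ to be an isomorphism, yielding the direct sum decomposition, while (4) gives the right-invariance of $\ker\omega$. Smoothness passes back and forth trivially since $\omega$ is smooth iff $v$ and $h$ are.

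For $(2)\Leftrightarrow(3)$, I would use the local trivializations $(\pi^{-1}(U_{\alpha}),\Phi_{\alpha})$ to produce canonical local sections $\sigma_{\alpha}:U_{\alpha}\to P$ by $\sigma_{\alpha}(x):=\Phi_{\alpha}^{-1}(x,e)$, and then set $\omega_{\alpha}:=\sigma_{\alpha}^{*}\omega$. Passing to $U_{\alpha}\cap U_{\beta}$, one has the defining relation $\sigma_{\beta}(x)=\sigma_{\alpha}(x)\cdot g_{\alpha\beta}(x)$, so the transformation law in (5) will be obtained by differentiating this product identity, applying the Leibniz rule for the right action together with property (4), and identifying the term coming from differentiation of $g_{\alpha\beta}$ with $\theta_{\alpha\beta}=g_{\alpha\beta}^{*}\theta$ using the canonical 1-form $\theta$. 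For the reverse implication, given a family $\{\omega_{\alpha}\}$ satisfying (5), I would glue them into a single $\mathfrak{g}$-valued 1-form on $P$: on $\pi^{-1}(U_{\alpha})$, every $p$ writes uniquely as $p=\sigma_{\alpha}(\pi(p))\cdot\phi_{\alpha}(p)$, and I would set $\omega_{p}:=\mathrm{Ad}_{\phi_{\alpha}(p)^{-1}}\bigl(\pi^{*}\omega_{\alpha}\bigr)_{p}+\bigl(\phi_{\alpha}^{*}\theta\bigr)_{p}$, then verify that (5) is exactly the cocycle condition needed for this definition to be independent of $\alpha$ on overlaps.

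The main obstacle will be the bookkeeping in $(2)\Rightarrow(3)$, specifically the differentiation of the product $\sigma_{\beta}=\sigma_{\alpha}\cdot g_{\alpha\beta}$. Splitting the pushforward of a tangent vector along this product into a ``right translation of $(\sigma_{\alpha})_{*}$'' piece and a ``fundamental-vector-field piece coming from $(g_{\alpha\beta})_{*}$'' is the standard, but fiddly, computation; once it is in place, properties (3) and (4) deliver (5) directly. All remaining verifications reduce to linearity and smoothness checks, which follow from the corresponding properties of $\omega$, $\tau$, $\Phi_{\alpha}$, and the transition functions.
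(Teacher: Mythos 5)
Your sketch is correct, but note that the paper itself offers no proof of this theorem: it is stated as background in Section 3, with the authors explicitly deferring all proofs of Sections 2--4 to the references ([1--3], [5]), so there is no in-paper argument to compare against. What you propose is the standard argument (as in Kobayashi--Nomizu or the lecture notes cited as [5]): $\omega:=\tau^{-1}\circ v$ with $H=\ker\omega$ for $(1)\Leftrightarrow(2)$, and pull-back along the canonical sections $\sigma_{\alpha}(x)=\Phi_{\alpha}^{-1}(x,e)$, plus differentiation of $\sigma_{\beta}=\sigma_{\alpha}\cdot g_{\alpha\beta}$ via the Leibniz splitting into a right-translated piece and a fundamental-vector-field piece, for $(2)\Leftrightarrow(3)$. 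Your sign and ordering conventions are consistent with the paper's: with $g_{\alpha\beta}(x)=\phi_{\alpha}(p)\phi_{\beta}(p)^{-1}$ one indeed gets $\sigma_{\beta}=\sigma_{\alpha}\cdot g_{\alpha\beta}$, and your gluing formula $\omega=\mathrm{Ad}(\phi_{\alpha}^{-1})\circ\pi^{*}\omega_{\alpha}+\phi_{\alpha}^{*}\theta$ in the direction $(3)\Rightarrow(2)$ is exactly the formula the paper takes for granted in Definition 5.3 when building $\widetilde{\omega}^{(\alpha)}$, so your sketch in effect supplies the justification the paper relies on there. Two small points to complete the write-up: in $(3)\Rightarrow(2)$ you must verify not only independence of $\alpha$ (the cocycle use of (5)) but also that the glued form satisfies (3) and (4); both are short computations, the latter using $\phi_{\alpha}(pg)=\phi_{\alpha}(p)g$ together with the equivariance $R_{g}^{*}\theta=\mathrm{Ad}_{g^{-1}}\theta$ of the canonical form on $G$. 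And in $(1)\Rightarrow(2)$, smoothness of $\omega$ should be tied to condition (3) of Definition 3.6 (smoothness of the projections $v,h$), as you implicitly do.
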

\begin{defn}
Assume that $(P,\pi,M,G)$ is a principal bundle, $\mathfrak{g}$
is the Lie algebra of $G$, and $H$ is a connection on $P$. $\omega:\mathfrak{X}(P)\rightarrow\mathfrak{g},$
defined by
\[
\omega(X):=\sigma_{u*}^{-1}(v(X)),\ X\in T_{u}P
\]
is called the connection form of $(P,H)$. Here $\sigma_{u}:G\rightarrow uG$
is the left action of $G$ on $P$.
\end{defn}
It is easy to check that $\omega$ is vertical: $\omega(H)=0$. In
fact, if we have a $\mathfrak{g}$-valued-1-form $\omega$ satisfying
conditions (3) and (4) in Theorem 3.9, then $H:=\ker(\omega)$ is
a connection on $P$ with $\omega$ as its connection form, which
is also right-covariant.
\begin{cor}
Let $(E,\pi,M,\mathbb{R}^{r},GL(r;\mathbb{R}))$ and $(F(E),\widetilde{\pi},M,GL(r;\mathbb{R}))$
be a vector bundle and its associated frame bundle, respectively.
Then there exists a 1-1 correspondence between the connections on
$E$ and the connections on $F(E)$.
\end{cor}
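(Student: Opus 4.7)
The plan is to identify both sides with the same system of local data on $M$ and then observe that the transformation laws match identically, because $E$ and $F(E)$ share their transition functions. Rather than work with the horizontal-distribution picture for $F(E)$, I would use Definition 3 in Theorem 3.9, which characterizes a connection on $F(E)$ by a family of $\mathfrak{gl}(r,\mathbb{R})$-valued 1-forms $\{\omega_\alpha\}$ on a trivializing cover $\{U_\alpha\}$ of $M$, subject on each overlap $U_\alpha\cap U_\beta$ to
\[
\omega_\beta = \mathrm{Ad}(g_{\alpha\beta}^{-1})\circ\omega_\alpha + \theta_{\alpha\beta}.
\]

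First I would unwind these pieces for $G=GL(r,\mathbb{R})$. Since $G$ is a matrix Lie group, the canonical 1-form $\theta$ acts on matrices as left multiplication by $g^{-1}$, so $\theta_{\alpha\beta}=g_{\alpha\beta}^{*}\theta = g_{\alpha\beta}^{-1}\,dg_{\alpha\beta}$, and $\mathrm{Ad}(g^{-1})X = g^{-1}Xg$ on $\mathfrak{gl}(r,\mathbb{R})$. The cocycle condition (5) therefore reads
\[
\omega_\beta = g_{\alpha\beta}^{-1}\,\omega_\alpha\,g_{\alpha\beta} + g_{\alpha\beta}^{-1}\,dg_{\alpha\beta}.
\]

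Next I would describe a linear connection $\nabla$ on $E$ in the same language. Fixing a local frame $\{e_i^{(\alpha)}\}$ of $E$ over $U_\alpha$, one expands $\nabla e_j^{(\alpha)}$ in this frame to obtain a $\mathfrak{gl}(r,\mathbb{R})$-valued 1-form $\widetilde\omega_\alpha$ on $U_\alpha$, the connection matrix of $\nabla$ in the $\alpha$-frame. A direct calculation using the change-of-frame relation $e_j^{(\beta)} = (\widetilde g_{\alpha\beta}^{-1})_j{}^i e_i^{(\alpha)}$ together with the Leibniz rule for $\nabla$ produces precisely
\[
\widetilde\omega_\beta = \widetilde g_{\alpha\beta}^{-1}\,\widetilde\omega_\alpha\,\widetilde g_{\alpha\beta} + \widetilde g_{\alpha\beta}^{-1}\,d\widetilde g_{\alpha\beta}.
\]
By Theorem 3.4, the transition functions $\widetilde g_{\alpha\beta}$ of $E$ coincide with those $g_{\alpha\beta}$ of $F(E)$, so the two transformation laws are literally the same equation in the same unknowns.

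The bijection is now formal: send $\nabla$ with connection matrices $\widetilde\omega_\alpha$ to the connection on $F(E)$ defined by $\omega_\alpha:=\widetilde\omega_\alpha$, and conversely recover $\nabla$ from $\{\omega_\alpha\}$ by declaring $\omega_\alpha$ to be the connection matrix of $\nabla$ in the $\alpha$-frame. Both maps are well-defined and mutually inverse by construction, and passing to a common refinement shows the correspondence is independent of the choice of trivializing cover. The main technical point --- the only place that requires genuine computation --- is the derivation of the inhomogeneous term $g^{-1}dg$ from differentiating the change-of-frame relation for $\nabla$; once this is in hand, the corollary reduces to the identity of transition functions provided by Theorem 3.4.
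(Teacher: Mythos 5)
The paper itself never proves this corollary: it sits in the review portion (Sections 2--4), whose proofs are explicitly deferred to the references, so there is no in-text argument to compare against. Your route is nevertheless the standard one and is essentially correct: describe a principal connection on $F(E)$ via Definition 3 of Theorem 3.9 as a family of $gl(r;\mathbb{R})$-valued local 1-forms glued by $\omega_\beta=\mathrm{Ad}(g_{\alpha\beta}^{-1})\circ\omega_\alpha+g_{\alpha\beta}^{-1}dg_{\alpha\beta}$, note that the connection matrices of a covariant derivative on $E$ obey the same law, and use Theorem 3.4 to identify the two sets of transition functions; the bijection is then formal.

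Two points need attention. First, your change-of-frame relation is written with $\widetilde g_{\alpha\beta}^{-1}$, but under the paper's conventions (Theorem 3.4 gives, for $TM$, $\frac{\partial}{\partial x_\beta^j}=\frac{\partial x_\alpha^i}{\partial x_\beta^j}\frac{\partial}{\partial x_\alpha^i}$) it is the \emph{frames} that transform by $g_{\alpha\beta}$ itself and the components by its inverse. With the relation exactly as you wrote it, the Leibniz computation yields $\widetilde\omega_\beta=\widetilde g_{\alpha\beta}\,\widetilde\omega_\alpha\,\widetilde g_{\alpha\beta}^{-1}-(d\widetilde g_{\alpha\beta})\widetilde g_{\alpha\beta}^{-1}$, which is the cocycle for $g_{\beta\alpha}$, and the claimed literal match with condition (5) would fail by an inversion; since this derivation is, as you say, the only step requiring genuine computation, it should be carried out with the correct relation (your final displayed law is the right one, so this is a fixable convention slip rather than a conceptual gap). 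Second, to close the bijection you should state explicitly that a family $\{\widetilde\omega_\alpha\}$ satisfying this transformation law determines a unique covariant derivative on $E$ (define $\nabla$ locally by the Leibniz rule; the law guarantees agreement on overlaps) and that distinct connections give distinct families --- one-line observations, but they are the content of ``well-defined and mutually inverse.'' For comparison, a more intrinsic argument in the spirit of Section 4 sends a principal connection $H$ on $F(E)$ to $\nabla_X s:=u\bigl(\widetilde X\,\hat s\bigr)$, where $\hat s(u):=u^{-1}(s(\pi(u)))$ is the $\mathbb{R}^r$-valued equivariant function attached to a section $s$, and recovers $H$ from $\nabla$ as the kernel of the globally assembled connection form (exactly the mechanism used in Definitions 4.8 and 5.3); your local-forms proof avoids that machinery at the cost of the bookkeeping above.
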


\begin{cor}
There exists a 1-1 correspondence between the connections on $M$
and $F(M)$.\end{cor}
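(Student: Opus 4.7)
The plan is to deduce Corollary 3.12 as an immediate specialization of Corollary 3.11, bridging the two statements by the standard identification of linear connections on a manifold with connections on its tangent bundle. Specifically, by a \emph{connection on} $M$ we understand a linear connection $\nabla:\mathfrak{X}(M)\times\mathfrak{X}(M)\to\mathfrak{X}(M)$, and such a $\nabla$ is by definition nothing other than a connection on the vector bundle $(TM,\pi,M,\mathbb{R}^{n},GL(n;\mathbb{R}))$. Hence the set of connections on $M$ coincides with the set of connections on $TM$ as a vector bundle, and there is nothing to prove for this identification beyond unraveling definitions.

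Once this identification is in hand, I would invoke Corollary 3.11 with the choice $E:=TM$ and $r:=n=\dim M$. Definition 3.3 gives $F(M)=F(TM)$, so the associated frame bundle in Corollary 3.11 becomes precisely $F(M)$. Corollary 3.11 then produces a bijection between the set of connections on $TM$ and the set of connections on $F(TM)=F(M)$. Composing this bijection with the identification of the previous paragraph yields the desired 1-1 correspondence between connections on $M$ and connections on $F(M)$.

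There is essentially no obstacle here, since the corollary is a direct specialization; the only point that deserves a sentence is to make explicit that the bijection from Corollary 3.11 is natural (in particular, it respects the conditions (1)--(2) of Definition 3.6 on $F(M)$ and the Koszul axioms on $M$), so that plugging $E=TM$ preserves the correspondence without loss of information. For the reader it may also be worth remarking that under this bijection the connection form $\omega$ on $F(M)$ provided by Definition 3.10 corresponds to the Christoffel symbols of $\nabla$ on $M$, which will be the viewpoint used later in Section 5 when the $\alpha$-connections $\nabla^{(\alpha)}$ on a statistical manifold $S$ are lifted to connection forms on $F(S)$.
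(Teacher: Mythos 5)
Your proposal is correct and follows exactly the route the paper intends: the paper states Corollary 3.12 immediately after Corollary 3.11 without proof (as with all background results in Sections 2--4), and the intended argument is precisely the specialization $E=TM$, $r=n$, together with the identification of linear connections on $M$ with vector-bundle connections on $TM$ and the definition $F(M)=F(TM)$. Nothing is missing.
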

\begin{defn}
Let $\pi_{*b}:H_{b}\rightarrow T_{\pi(b)}M$. For any $X\in\mathfrak{X}(M)$,
there exists unique $\widetilde{X}=\pi_{*}^{-1}(X)\in\mathfrak{X}(P)$,
called the horizontal lift of $X$, s.t. $\pi_{*}(\widetilde{X})=X$.\end{defn}
\begin{thm}
A vector field on $P$ is right-invariant if and only if it is the
horizontal lift of some vector filed on $M$. \end{thm}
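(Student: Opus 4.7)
The plan is to prove the equivalence in two directions, using the right-invariance of the horizontal distribution recorded in property (2) of Definition 3.6, together with the identity $\pi\circ R_{g}=\pi$, which is built into the fibre condition (2) of Definition 3.1. Throughout I read the statement in the standard way: right-invariance is asserted for horizontal vector fields, and the ``horizontal lift'' operation produces such horizontal fields automatically.

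For the implication \emph{horizontal lift $\Rightarrow$ right-invariant}, I would fix $X\in\mathfrak{X}(M)$ with horizontal lift $\widetilde{X}$ and, for each $p\in P$ and $g\in G$, compare the two tangent vectors $(R_{g})_{*p}\widetilde{X}(p)$ and $\widetilde{X}(pg)$ at the common point $pg$. The first lies in $H_{pg}$ because $(R_{g})_{*}H_{p}=H_{pg}$; the second is horizontal by construction. Applying $\pi_{*}$ and using $\pi\circ R_{g}=\pi$ together with $\pi_{*}\widetilde{X}=X$ shows that both project to $X(\pi(pg))$. Since $\pi_{*pg}:H_{pg}\to T_{\pi(pg)}M$ is an isomorphism (this is the uniqueness asserted in Definition 3.14), the two vectors must agree, which is precisely right-invariance of $\widetilde{X}$.

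For the converse \emph{right-invariant $\Rightarrow$ horizontal lift}, I would take a horizontal right-invariant $Y\in\mathfrak{X}(P)$ and construct a field on $M$ whose lift recovers $Y$. The natural candidate is $X(x):=\pi_{*}Y(p)$ for an arbitrary $p\in\pi^{-1}(x)$, and the critical step is well-definedness. Given a second preimage $p'=pg$, right-invariance yields $Y(pg)=(R_{g})_{*}Y(p)$, and then $\pi\circ R_{g}=\pi$ forces $\pi_{*}Y(pg)=\pi_{*}Y(p)$. Smoothness of $X$ descends from that of $Y$ via a local section of $\pi$, which is available from the local trivializations in Definition 3.1. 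By construction $Y$ is horizontal and $\pi_{*}Y=X\circ\pi$, so the uniqueness clause in Definition 3.14 identifies $Y$ with $\widetilde{X}$.

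No serious obstacle is expected: the proof is a clean manipulation of the definitions, and every ingredient (right-invariance of $H$, fibre-preservation of $R_{g}$, and the isomorphism $\pi_{*}|_{H_{p}}$) has already been set up in Section 3. The only step deserving real attention is the well-definedness of $X$ in the converse direction, which is exactly where the hypothesis of right-invariance of $Y$ is indispensable.
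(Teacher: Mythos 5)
Your argument is correct and is the standard one; note that the paper states this result (Theorem 3.14) only as background, without proof, deferring to its references, so there is no in-paper proof to compare against. Your explicit caveat that right-invariance should be asserted for \emph{horizontal} vector fields is in fact necessary for the statement to be true as read literally: vertical fields of the form $p\mapsto\tau(A(p))(p)$ with $A:P\rightarrow\mathfrak{g}$ satisfying $A(pg)=Ad_{g^{-1}}A(p)$ are right-invariant yet are not horizontal lifts, so your reading repairs a genuine imprecision in the statement. One small slip: the isomorphism $\pi_{*b}:H_{b}\rightarrow T_{\pi(b)}M$ and the uniqueness of the lift that you invoke come from Definition 3.13, not ``Definition 3.14,'' which is the theorem itself.
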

\begin{defn}
Let $\gamma:(-\epsilon,\epsilon)\rightarrow M$ be a smooth curve
on $M$. $\widetilde{\gamma}:(-\epsilon,\epsilon)\rightarrow P$ is
called the horizontal lift of $\gamma$ if
\[
\pi(\widetilde{\gamma}(t))=\gamma(t),\ \widetilde{\gamma}'(t)\in H_{\widetilde{\gamma}(t)},\ t\in(-\epsilon,\epsilon).
\]
 \end{defn}
\begin{thm}
Let $\gamma:(-\epsilon,\epsilon)\rightarrow M$ be a smooth curve
on $M$ with $\gamma(0)=p$. Then\\
(1) for any $b\in\pi^{-1}(p)$, there exists a unique horizontal lift
$\widetilde{\gamma}$ s.t. $\widetilde{\gamma}(0)=b$.\\
(2) Let $\widetilde{\gamma}_{1}$ be another smooth curve with $\widetilde{\gamma}_{1}(0)=bg$,
$g\in G$, then $\widetilde{\gamma}_{1}$ is also a horizontal lift
of $\gamma$ if and only if $\widetilde{\gamma}_{1}(t)=\widetilde{\gamma}(t)g$,
$t\in(-\epsilon,\epsilon).$
\end{thm}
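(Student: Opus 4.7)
The plan is to prove part (1) by reducing the horizontality requirement to a first-order ODE in the structure group $G$ via a local trivialization, and then to deduce part (2) from the right-invariance of the horizontal distribution together with the uniqueness obtained in part (1).

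For part (1), I would first choose a local trivialization $(\pi^{-1}(U),\Phi_{U})$ with $\Phi_{U}=(\pi,\phi_{U})$ over a neighborhood of $p=\gamma(0)$. On a sufficiently small subinterval $I\subset(-\epsilon,\epsilon)$ about $0$, every lift $\widetilde{\gamma}$ of $\gamma$ into $\pi^{-1}(U)$ is determined by a smooth curve $g:I\to G$ through $\widetilde{\gamma}(t)=\Phi_{U}^{-1}(\gamma(t),g(t))$. The horizontality condition $\widetilde{\gamma}'(t)\in H_{\widetilde{\gamma}(t)}$ is equivalent to $\omega(\widetilde{\gamma}'(t))=0$ for the connection form $\omega$ of Definition 3.11. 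Expanding this in the trivialization and using the local form $\omega_{\alpha}$ from Definition 3 of Theorem 3.9, one obtains a linear first-order ODE for $g(t)$, schematically of the form $L_{g(t)^{-1}*}g'(t)=-\omega_{\alpha}(\gamma'(t))$, with initial condition $g(0)=\phi_{U}(b)$. Standard ODE theory on Lie groups yields a unique smooth local solution, hence a unique horizontal lift on $I$. To extend $\widetilde{\gamma}$ to all of $(-\epsilon,\epsilon)$, I would cover the image of any compact subinterval of $\gamma$ by finitely many trivializing neighborhoods and patch the local lifts by uniqueness on overlaps.

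For part (2), in the ``if'' direction, suppose $\widetilde{\gamma}_{1}(t)=\widetilde{\gamma}(t)g$. Then $\pi(\widetilde{\gamma}_{1}(t))=\pi(\widetilde{\gamma}(t))=\gamma(t)$, and $\widetilde{\gamma}_{1}'(t)=(R_{g})_{*\widetilde{\gamma}(t)}(\widetilde{\gamma}'(t))$. By condition (2) of Definition 3.7, $(R_{g})_{*}H_{\widetilde{\gamma}(t)}=H_{\widetilde{\gamma}(t)g}$, so $\widetilde{\gamma}_{1}'(t)\in H_{\widetilde{\gamma}_{1}(t)}$ and $\widetilde{\gamma}_{1}$ is horizontal. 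Conversely, if $\widetilde{\gamma}_{1}$ is any horizontal lift of $\gamma$ with $\widetilde{\gamma}_{1}(0)=bg$, then the argument just given shows that the curve $t\mapsto\widetilde{\gamma}(t)g$ is also a horizontal lift of $\gamma$ through $bg$; the uniqueness in part (1) therefore forces $\widetilde{\gamma}_{1}(t)=\widetilde{\gamma}(t)g$ on the entire interval.

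The main obstacle I anticipate is the bookkeeping in part (1): writing out $\omega(\widetilde{\gamma}'(t))$ in the trivialization cleanly enough to recognize the result as a standard linear ODE on $G$, so that global existence in $t$ follows from the classical theory and one does not need any delicate completeness argument. Once that ODE is in place, uniqueness is automatic, and part (2) is essentially formal, resting only on the right-invariance of $H$ and the previously established uniqueness.
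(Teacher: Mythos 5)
The paper never proves this theorem: it sits in the background Section~3, whose statements are explicitly given without proof and deferred to references [3] and [5], so there is no in-paper argument to compare yours against; I can only judge the proposal on its own. Your route is the classical one (local trivialization, reduction to an ODE in the structure group, patching by uniqueness, then right-invariance of $H$ for part (2)), and it is essentially correct. Two points deserve repair. First, the displayed local equation is not quite the right one: writing $\widetilde{\gamma}(t)=\Phi_{U}^{-1}(\gamma(t),g(t))$ and applying the connection form, the transformation rule $R_{g}^{*}\omega=Ad_{g^{-1}}\omega$ produces $(L_{g(t)^{-1}})_{*}g'(t)=-Ad_{g(t)^{-1}}\bigl(\omega_{\alpha}(\gamma'(t))\bigr)$, equivalently $g'(t)=-(R_{g(t)})_{*}\bigl(\omega_{\alpha}(\gamma'(t))\bigr)$; the $Ad_{g^{-1}}$ factor cannot be dropped, though for a matrix group either version is a linear equation ($g'=-A(t)\,g$ versus $g'=-g\,A(t)$), so existence and uniqueness are unaffected. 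Second, ``standard ODE theory'' by itself gives only local-in-time solutions on a general Lie group; to have the solution on the whole trivializing subinterval you should either note that for $G=GL(n;\mathbb{R})$ (the case relevant to the frame bundles used later in the paper) the equation is linear and hence globally solvable, or invoke the standard completeness fact for time-dependent right-invariant equations $g'=(R_{g})_{*}\xi(t)$ on a Lie group, where translation invariance rules out finite-time escape. With those adjustments, your patching argument for (1) and the right-invariance-plus-uniqueness argument for (2) constitute a complete and standard proof of the statement.
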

Hence horizontal lift curve is unique when initial point is fixed.
Furthermore, all other horizontal lifts are just formed by right-translations.

\section{Geometry on Principal Bundles}
\begin{defn}
Denote by $(P,\pi,M,G,H,\omega)$ a principal bundle with connection
$H$ and connection form $\omega$.
\[
\Omega:=d\omega+\frac{1}{2}\omega\wedge\omega
\]
 is called the curvature form, where $\Omega$ is a $\mathfrak{g}$-valued-2-form
on $P$. \end{defn}
\begin{prop}
The second structure equation holds that
\[
\Omega=d\omega+\frac{1}{2}[\ \omega,\omega\ ]=d\omega\circ h.
\]
\end{prop}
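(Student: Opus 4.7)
The plan is to verify the two equalities in $\Omega = d\omega+\frac{1}{2}\omega\wedge\omega = d\omega+\frac{1}{2}[\omega,\omega] = d\omega\circ h$ in turn. The first, $\frac{1}{2}\omega\wedge\omega = \frac{1}{2}[\omega,\omega]$, is a matter of unwinding the standard convention for $\mathfrak{g}$-valued forms: evaluated on a pair $(X,Y)$ both sides equal $[\omega(X),\omega(Y)]-[\omega(Y),\omega(X)] = 2[\omega(X),\omega(Y)]$, so the identity is a one-line check. The substantive content lies in the second equality; since the difference $(d\omega+\frac{1}{2}[\omega,\omega])-d\omega\circ h$ is $C^{\infty}(P)$-multilinear in its two vector-field arguments, it suffices to verify it on ordered pairs drawn from the splitting $TP=V\oplus H$, giving three cases to check.

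Cases (i) and (ii) are short. When both $X,Y$ are horizontal, $\omega(X)=\omega(Y)=0$ kills the bracket term on the left while $h(X)=X$ and $h(Y)=Y$ reduce the right side to the same $d\omega(X,Y)$. When $X=\tau(A)$ and $Y=\tau(B)$ are both fundamental, Cartan's formula $d\omega(X,Y)=X\omega(Y)-Y\omega(X)-\omega([X,Y])$ combined with the constancy of $\omega(\tau(A))=A$ and $\omega(\tau(B))=B$ yields $d\omega(X,Y)=-\omega(\tau([A,B]))=-[A,B]$, which is cancelled precisely by $\frac{1}{2}[\omega,\omega](X,Y)=[A,B]$; the right side vanishes because $h$ annihilates vertical vectors.

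The crux is case (iii), with $X$ horizontal and $Y=\tau(A)$ vertical. Here the bracket term on the left vanishes since $\omega(X)=0$, and the right side vanishes since $h(\tau(A))=0$, so the whole identity reduces to showing $d\omega(X,\tau(A))=0$. Cartan's formula reduces this further to the horizontality of $[X,\tau(A)]$, i.e.\ to $\omega([X,\tau(A)])=0$. I would establish this by differentiating the $Ad$-equivariance $R_g^{*}\omega = Ad_{g^{-1}}\circ\omega$ from Theorem 3.9 along the flow $R_{\exp(tA)}$ of $\tau(A)$ at $t=0$, producing the Lie-derivative identity $\mathcal{L}_{\tau(A)}\omega = -ad_{A}\circ\omega$; evaluating both sides on the horizontal $X$ and using $\omega(X)=0$ gives $-\omega([\tau(A),X]) = (\mathcal{L}_{\tau(A)}\omega)(X) - \tau(A)(\omega(X)) = -[A,0]-0 = 0$. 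This equivariance-to-horizontality passage is the one step that genuinely exploits the defining property of the connection form, and I expect it to be the main obstacle in turning the sketch into a rigorous proof.
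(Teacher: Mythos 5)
Your argument is correct, but there is no proof in the paper to compare it against: Sections 2--4 are explicitly review material whose proofs are deferred to the references, so Proposition 4.2 is simply quoted there. What you propose is the standard textbook proof: reduce by tensoriality to the three cases of the splitting $TP=V\oplus H$, handle the horizontal--horizontal and vertical--vertical cases by Cartan's formula together with $\omega(\tau(A))=A$ and $[\tau(A),\tau(B)]=\tau([A,B])$, and in the mixed case obtain horizontality of $[\tau(A),X]$ by differentiating the equivariance $R_{g}^{*}\omega=Ad_{g^{-1}}\circ\omega$ along the flow $R_{\exp(tA)}$ to get $\mathcal{L}_{\tau(A)}\omega=-\mathrm{ad}_{A}\circ\omega$. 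This is sound; the only point to make explicit when writing it up is that in the mixed case you must extend the horizontal vector to a horizontal vector field (e.g.\ by applying $h$ to an arbitrary smooth extension), so that $\omega(X)\equiv0$ as a function before Cartan's formula and the Lie-derivative identity are applied. A second, cosmetic caveat concerns your first equality: whether $\omega\wedge\omega$ equals $[\omega,\omega]$ or $\frac{1}{2}[\omega,\omega]$ is purely a matter of convention, and the paper is not internally consistent about it (Definition 4.1 writes $\frac{1}{2}\omega\wedge\omega$, while Proposition 5.7 writes $\widetilde{\omega}^{(\alpha)}\wedge\widetilde{\omega}^{(\alpha)}$ with the matrix-product convention, under which $\omega\wedge\omega=\frac{1}{2}[\omega,\omega]$); your reading, the bracket convention with $\omega\wedge\omega=[\omega,\omega]$, is the one that makes Definition 4.1 and Proposition 4.2 agree, and is fine provided you state it.
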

\begin{defn}
Let $(F(M),\pi,M,GL(n;\mathbb{R}))$ be a frame bundle over $M$.
$\theta:T(F(M))\rightarrow\mathbb{R}^{n}$, defined by
\[
\theta(Y_{u}):=u^{-1}(\pi_{*}Y_{u}),\ Y_{u}\in T_{u}F(M)
\]
 is called the canonical 1-form on $F(M)$, where
\[
u:\mathbb{R}^{n}\rightarrow T_{\pi(u)}M,\ u(\xi):=u\xi,\xi\in\mathbb{R}^{n}.
\]

\end{defn}
In fact, the canonical 1-from $\theta$ can only be defined on frame
bundles.
\begin{defn}
For any $\xi\in\mathbb{R}^{n}$, $H(\xi):F(M)\rightarrow H$ s.t.
\[
H(\xi)_{u}:=\pi_{*}^{-1}(u\xi)
\]
 is called the fundamental horizontal vector field, where $\pi_{*}:H_{u}\rightarrow T_{\pi(u)}M$
is linear isomorphism.
\end{defn}
Fundamental horizontal vector fields and fundamental vertical vector
fields are horizontal and vertical, respectively, hence \textquotedbl{}orthogonal\textquotedbl{}
to each other. In addition, they form a basis of $T(F(M))$, which
implies that $T(F(M))$ is a trivial bundle, or parallelizable.
\begin{defn}
Let $(F(M),\pi,M,GL(n;\mathbb{R}),H,\omega)$ be a frame bundle with
connection $H$ and connection form $\omega$.
\[
\Theta:=d\theta\circ h
\]
 is called the torsion form on $F(M)$, where $\Theta$ is a $\mathbb{R}^{n}$-valued-2-form
on $F(M)$. \end{defn}
\begin{prop}
The first structure equation holds:
\[
\Theta=d\theta+\omega\wedge\theta.
\]

\end{prop}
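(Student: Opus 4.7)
\emph{Proof plan.} Both sides of the asserted identity are smooth $\mathbb{R}^{n}$-valued 2-forms on $F(M)$, hence $C^{\infty}(F(M))$-bilinear and alternating in their arguments; it therefore suffices to verify the identity pointwise on a basis of each tangent space. The fundamental vertical fields $\tau(A)$, $A\in\mathfrak{gl}(n;\mathbb{R})$, span the vertical subspace at every point, and the fundamental horizontal fields $H(\xi)$, $\xi\in\mathbb{R}^{n}$, span the horizontal subspace at every point, as already noted after Definition~4.4. Since $TF(M)=V\oplus H$, by antisymmetry I only need to check three cases.

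\emph{Cases of matched type.} If $X=H(\xi)$ and $Y=H(\eta)$ are both horizontal, then $\omega(X)=\omega(Y)=0$ kills $\omega\wedge\theta$, while $h(X)=X$ and $h(Y)=Y$ forces both sides to equal $d\theta(X,Y)$. If $X=\tau(A)$ and $Y=\tau(B)$ are both vertical, then $\Theta(X,Y)=d\theta(0,0)=0$; on the other side, $\theta(X)=u^{-1}(\pi_{*}X)=0$ (and likewise for $Y$), so $\omega\wedge\theta$ vanishes, and in the Koszul formula
\[
d\theta(X,Y)=X\theta(Y)-Y\theta(X)-\theta([X,Y])
\]
the first two terms vanish while $[\tau(A),\tau(B)]$ is still vertical (being in the image of $\tau$), which kills the third.

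\emph{Mixed case.} Take $X=H(\xi)$ and $Y=\tau(A)$; then $\Theta(X,Y)=d\theta(X,0)=0$, so I must show $d\theta(X,Y)+(\omega\wedge\theta)(X,Y)=0$. Using $\omega(X)=0$, $\omega(Y)=A$, $\theta(X)_{u}=u^{-1}(u\xi)=\xi$ (a constant function of $u$), and $\theta(Y)=0$, one computes $(\omega\wedge\theta)(X,Y)=-A\xi$, while Koszul reduces to $d\theta(X,Y)=-\theta([X,Y])$. The whole equation thus collapses to the bracket identity $[H(\xi),\tau(A)]=-H(A\xi)$. This follows from the equivariance $(R_{g})_{*}H(\xi)_{u}=H(g^{-1}\xi)_{ug}$, which is immediate from $\pi_{*}\circ(R_{g})_{*}=\pi_{*}$ together with $(R_{g})_{*}H_{u}=H_{ug}$: applying this along the flow $R_{\exp(tA)}$ of $\tau(A)$ gives
\[
[\tau(A),H(\xi)]=\tfrac{d}{dt}\Big|_{t=0}H(e^{tA}\xi)=H(A\xi).
\]
Hence $\theta([X,Y])=-A\xi$, so $d\theta(X,Y)=A\xi$ and the left side is indeed $A\xi-A\xi=0$.

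\emph{Main obstacle.} The two matched-type cases are essentially bookkeeping from the definitions. The only substantive step is deriving the bracket formula $[\tau(A),H(\xi)]=H(A\xi)$ via the equivariance of the fundamental horizontal field under the right $G$-action; once that is in hand, the first structure equation follows from the one-line cancellation above.
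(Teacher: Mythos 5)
Your proof is correct: the paper states this first structure equation without proof (Section 4 is review material, with proofs deferred to the references), and your pointwise verification on the vertical/horizontal decomposition---both horizontal, both vertical, and the mixed case---is exactly the standard argument that those references give. The only substantive ingredient, the bracket identity $[\tau(A),H(\xi)]=H(A\xi)$ obtained from the equivariance $(R_{g})_{*}H(\xi)_{u}=H(g^{-1}\xi)_{ug}$, is sound and is the same fact the paper itself records (in its $\alpha$-connection form) as Lemma 5.6(3), so your route is fully consistent with the paper's framework.
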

In fact, the first and the second structure equations are similar
to the structure equations on a smooth manifold with a connection.
\begin{thm}
Suppose that $(F(M),\pi,M,GL(n;\mathbb{R}),H,\omega)$ is a frame
bundle with connection $H$ and connection form $\omega$. Then the
torsion form $\Theta$ and the curvature form $\Omega$ satisfy the
following equations
\[
d\Theta=\Omega\wedge\theta-\omega\wedge\Theta,
\]
and
\[
d\Omega=\Omega\wedge\omega,
\]
which are called the first and the second Bianchi idendities, respectively. \end{thm}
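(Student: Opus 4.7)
The plan is to take the exterior derivative of each structure equation (Propositions 4.2 and 4.5), exploit $d^{2}=0$, and then substitute the structure equations themselves back in so that every occurrence of $d\omega$ and $d\theta$ is traded for an expression in $\Omega,\Theta,\omega,\theta$. Both Bianchi identities should then fall out as purely algebraic consequences of the structure equations, using only the graded Leibniz rule for $d$ together with the compatibility of the Lie bracket on $\mathfrak{g}=\mathfrak{gl}(n,\mathbb{R})$ with its standard representation on $\mathbb{R}^{n}$. No further bundle-geometric input (horizontal lifts, connections, etc.) should be required beyond what is already baked into the structure equations.

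For the second Bianchi identity, I would start from $\Omega=d\omega+\tfrac{1}{2}\omega\wedge\omega$ and apply $d$. Since $d^{2}\omega=0$, the graded Leibniz rule gives
\[
d\Omega=\tfrac{1}{2}\bigl(d\omega\wedge\omega-\omega\wedge d\omega\bigr).
\]
Substituting $d\omega=\Omega-\tfrac{1}{2}\omega\wedge\omega$ into both occurrences produces the linear terms $\tfrac{1}{2}(\Omega\wedge\omega-\omega\wedge\Omega)$ together with cubic terms of the form $\omega\wedge\omega\wedge\omega$. These cubic terms cancel by the graded Jacobi identity for $\mathfrak{g}$-valued 1-forms. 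Rearranging what survives (under the paper's wedge convention, in which $\omega\wedge\omega=\tfrac{1}{2}[\omega,\omega]$) yields $d\Omega=\Omega\wedge\omega$, as claimed.

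For the first Bianchi identity, I would differentiate $\Theta=d\theta+\omega\wedge\theta$. Using $d^{2}\theta=0$ and the sign convention $|\omega|=1$, one obtains
\[
d\Theta=d\omega\wedge\theta-\omega\wedge d\theta.
\]
I would then substitute $d\omega=\Omega-\tfrac{1}{2}\omega\wedge\omega$ and $d\theta=\Theta-\omega\wedge\theta$ and expand. The mixed auxiliary terms $\omega\wedge\omega\wedge\theta$ that appear combine (associativity of the wedge intertwined with the $\mathfrak{gl}(n,\mathbb{R})$-action on $\mathbb{R}^{n}$) in such a way that, after the same bracket/Jacobi bookkeeping used above, they cancel. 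The two remaining terms are precisely $\Omega\wedge\theta$ and $-\omega\wedge\Theta$.

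The main obstacle I anticipate is not conceptual but notational: the wedge $\omega\wedge\omega$ on $\mathfrak{g}$-valued forms, the wedge $\omega\wedge\theta$ between a $\mathfrak{g}$-valued and an $\mathbb{R}^{n}$-valued form, and the graded Leibniz rule all interact, and the cancellations of cubic terms hinge on interpreting these wedges consistently with Propositions 4.2 and 4.5. Once the sign conventions are pinned down so that $\omega\wedge\omega$ is identified with $\tfrac{1}{2}[\omega,\omega]$ and the representation of $\mathfrak{g}$ on $\mathbb{R}^{n}$ is used for $\omega\wedge\theta$, the rest of the argument is a straightforward, line-by-line algebraic computation with no further geometric content.
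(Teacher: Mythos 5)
The paper never proves this theorem: Sections 2--4 are explicitly review material stated without proofs (the Bianchi identities are quoted from references [3] and [5]), so there is no argument in the paper to compare yours against, and your proposal should be judged on its own. On its own it is essentially the standard proof and it does work: differentiate the two structure equations (Propositions 4.2 and 4.6), use $d^{2}=0$ and the graded Leibniz rule, and substitute the structure equations back in. For the first identity the computation is literally as you say, $d\Theta=d\omega\wedge\theta-\omega\wedge d\theta$, and after substitution the mixed cubic terms cancel via the $\mathfrak{gl}(n,\mathbb{R})$-action on $\mathbb{R}^{n}$ (associativity of matrix multiplication; no Jacobi identity is really needed), leaving exactly $\Omega\wedge\theta-\omega\wedge\Theta$. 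The only point you must tighten is the normalization of the wedge of two $\mathfrak{g}$-valued forms, because it determines what the second identity literally asserts. If $\wedge$ is the matrix-product wedge (the reading under which Proposition 4.6 and the first identity are literal), then the structure equation is $\Omega=d\omega+\omega\wedge\omega$ and your computation gives $d\Omega=\Omega\wedge\omega-\omega\wedge\Omega$, not $\Omega\wedge\omega$; to land on the statement as printed you must read $\wedge$ between $\mathfrak{g}$-valued forms as the graded bracket, which is the convention forced by Definition 4.1 together with Proposition 4.2, namely $\omega\wedge\omega=[\omega,\omega]$ --- not $\omega\wedge\omega=\tfrac{1}{2}[\omega,\omega]$ as you wrote, a normalization under which Definition 4.1 and Proposition 4.2 would disagree by a factor of $2$. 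Once that single convention is fixed consistently, your cancellation of the cubic terms (associativity in the matrix picture, equivalently $[[\omega,\omega],\omega]=0$ in the bracket picture) and the rest of the line-by-line algebra are correct; if desired, composing the resulting identities with the horizontal projection $h$ recovers the covariant forms $D\Theta=\Omega\wedge\theta$ and $D\Omega=0$.
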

\begin{defn}
Denote by $(F(M),\pi,M,GL(n;\mathbb{R}),H,\Omega,\Theta)$ a principal
bundle with connection $H$, connection form $\omega$, curvature
form $\Omega$ and torsion form $\Theta$. For any $X,Y,Z\in T_{p}M,\ W\in\mathfrak{X}(M),\ u\in\pi^{-1}(p)$,
we have
\[
\nabla_{X}W:=u\widetilde{X}(\theta(\widetilde{W})),
\]

\[
T(X,Y):=u(\Theta(\widetilde{X},\widetilde{Y})),
\]
and
\[
R(X,Y)Z:=u(\Omega(\widetilde{X},\widetilde{Y})u^{-1}(Z)),
\]
where $\widetilde{X}$, $\widetilde{Y}$ and $\widetilde{W}$ are
the horizontal lifts of the vector fields $X$, $Y$ and $W$, respectively.
\end{defn}
The right sides of all formulae involving geometric structures on
frame bundle $F(M)$ are irrelevant to base manifold $M$, which means
that geometric structures on base manifold can be calculated on bundles.
The importance lies in that geometric structures on frame bundle are
often easier to handle.
\begin{thm}
Suppose that $(F(M),H,\Omega,\Theta)$ is a frame bundle over $(M,\nabla,T,R)$,
where $\nabla$ is the connection of $M$ induced by $H$. Also denote
$T$ and $R$ the torsion tensor and curvature tensor on $M$, respectively.
For any $X,Y,Z\in\mathfrak{X}(M)$, we have
\[
T(X,Y)=\nabla_{X}Y-\nabla_{Y}X-[X,Y],
\]
and
\[
R(X,Y)Z=\nabla_{X}\nabla_{Y}Z-\nabla_{Y}\nabla_{X}Z-\nabla_{[X,Y]}Z.
\]
\end{thm}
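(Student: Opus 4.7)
The plan is to verify the two identities separately by pulling everything back to $F(M)$ via horizontal lifts and then exploiting the structure equations (Propositions 4.2 and 4.6). The key simplification is that the connection form $\omega$ vanishes on horizontal vectors, so both structure equations collapse to $\Theta(\widetilde X,\widetilde Y) = d\theta(\widetilde X,\widetilde Y)$ and $\Omega(\widetilde X,\widetilde Y) = d\omega(\widetilde X,\widetilde Y)$. The torsion identity will fall out almost immediately from Cartan's $d$-formula; the curvature identity is subtler and requires translating the left-hand side into the action of horizontal lifts on an $\mathbb R^n$-valued equivariant function on $F(M)$.

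For the torsion, since $\widetilde X,\widetilde Y\in H$, Proposition 4.6 combined with the Cartan formula gives
\[
\Theta(\widetilde X,\widetilde Y) = d\theta(\widetilde X,\widetilde Y) = \widetilde X(\theta(\widetilde Y)) - \widetilde Y(\theta(\widetilde X)) - \theta([\widetilde X,\widetilde Y]).
\]
By Definition 4.8, the first two terms equal $u^{-1}\nabla_X Y$ and $u^{-1}\nabla_Y X$. Since $\theta(V_u) = u^{-1}\pi_*V_u$ and $\pi_*[\widetilde X,\widetilde Y] = [X,Y]$, the third term equals $u^{-1}[X,Y]$. Applying $u$ on the left gives $T(X,Y) = \nabla_X Y - \nabla_Y X - [X,Y]$.

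For the curvature, attach to each $Z\in\mathfrak X(M)$ the $\mathbb R^n$-valued function $f_Z(u) := u^{-1}Z_{\pi(u)} = \theta(\widetilde Z)(u)$ on $F(M)$. The definition of $\nabla$ then reads $f_{\nabla_X Z} = \widetilde X(f_Z)$, which iterates to
\[
f_{\nabla_X\nabla_Y Z - \nabla_Y\nabla_X Z - \nabla_{[X,Y]}Z} = [\widetilde X,\widetilde Y](f_Z) - \widetilde{[X,Y]}(f_Z).
\]
Since $\pi_*([\widetilde X,\widetilde Y] - \widetilde{[X,Y]}) = 0$, this combination is a vertical vector field, and by Definition 3.7 it equals the fundamental vector field $\tau(\omega([\widetilde X,\widetilde Y]))$ (because $\omega$ annihilates $\widetilde{[X,Y]}$). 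The equivariance $f_Z(u\cdot g) = g^{-1}f_Z(u)$ of $f_Z$ forces $\tau(A)(f_Z) = -A\cdot f_Z$ for any $A\in\mathfrak g$, so the right-hand side becomes $-\omega([\widetilde X,\widetilde Y])\cdot f_Z$. On a horizontal pair the second structure equation reduces to $\Omega(\widetilde X,\widetilde Y) = -\omega([\widetilde X,\widetilde Y])$, and combining these yields $f_{\nabla_X\nabla_Y Z - \nabla_Y\nabla_X Z - \nabla_{[X,Y]}Z} = \Omega(\widetilde X,\widetilde Y)\cdot f_Z$. Evaluating at $u$ and multiplying by $u$ on the left recovers exactly the expression for $R(X,Y)Z$ in Definition 4.8.

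The main obstacle is the curvature step, in particular the interplay of two sign conventions: $\tau(A)(f_Z) = -A\cdot f_Z$ from the right $G$-action on frames, and $\Omega(\widetilde X,\widetilde Y) = -\omega([\widetilde X,\widetilde Y])$ from $d\omega$ on horizontal vectors. One must also track carefully the vertical part of $[\widetilde X,\widetilde Y]$, which is what carries all the curvature information. Once these conventions are reconciled consistently with Definitions 3.7 and 4.1 together with Proposition 4.2, the rest is a routine translation between vector fields on $M$ and equivariant $\mathbb R^n$-valued functions on $F(M)$.
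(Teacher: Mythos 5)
Your proof is correct and follows essentially the same route the paper itself uses: the paper states this theorem without proof (as review material), but its proof of the $\alpha$-version in Theorem 5.8 is exactly your argument --- structure equations on horizontal lifts plus the Cartan formula for the torsion, and for the curvature the identification of $[\widetilde X,\widetilde Y]-\widetilde{[X,Y]}$ with a fundamental vertical field $\tau(A)$, $A=\omega([\widetilde X,\widetilde Y])$, acting on the equivariant function $\theta(\widetilde Z)$ with the sign $-A$, matched against $\Omega(\widetilde X,\widetilde Y)=-\omega([\widetilde X,\widetilde Y])$. Your bookkeeping of the two sign conventions agrees with the paper's computation, so no gaps.
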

\begin{cor}
Let $(F(M),H,\Omega,\Theta)$ be the frame bundle over $(M,\nabla,T,R)$.
Then
\begin{eqnarray*}
\gamma:(-\epsilon,\epsilon)\rightarrow M\text{\ is\ a\ geodesic} & \Longleftrightarrow\nabla_{\gamma'}\gamma'=0\Longleftrightarrow & \widetilde{\gamma}'(\theta(\widetilde{\gamma}'))=0;\\
(M,\nabla)\text{\ is\ torsion\ free} & \Longleftrightarrow\,\,\,\,\, T=0\,\,\,\,\,\Longleftrightarrow & \Theta=0;\\
(M,\nabla)\text{\ is\ flat} & \Longleftrightarrow\,\,\,\,\, R=0\,\,\,\,\,\Longleftrightarrow & \Omega=0.
\end{eqnarray*}

\end{cor}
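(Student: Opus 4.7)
The plan is to dispatch each of the three equivalences separately, by chaining together Definition 4.6 (which expresses $\nabla$, $T$, and $R$ on $M$ in terms of $\theta$, $\Theta$, and $\Omega$ on $F(M)$), Theorem 4.7 (which identifies the resulting $T$ and $R$ with the classical torsion and curvature tensors on $M$), and the standard definitions of a geodesic, a torsion-free connection, and a flat connection.

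For the geodesic row, the leftmost equivalence is simply the definition of a geodesic. For the right equivalence I would apply the first formula of Definition 4.6 with $X=W=\gamma'$, observing that the horizontal lift of the vector field $\gamma'$, evaluated along the horizontal lift $\widetilde{\gamma}$, is precisely the tangent vector field $\widetilde{\gamma}'$ (by Definition 3.15). This yields $\nabla_{\gamma'}\gamma' = u\,\widetilde{\gamma}'(\theta(\widetilde{\gamma}'))$ with $u=\widetilde{\gamma}(t)\in F(M)$, and since $u:\mathbb{R}^n \to T_{\pi(u)}M$ is a linear isomorphism, the two vanishing conditions coincide.

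For the torsion-free row, the leftmost equivalence is Remark 2.3 combined with the first identity of Theorem 4.7. The right equivalence rests on the second formula of Definition 4.6, $T(X,Y) = u(\Theta(\widetilde{X},\widetilde{Y}))$: by injectivity of $u$, $T\equiv 0$ iff $\Theta(\widetilde{X},\widetilde{Y})=0$ for all $X,Y\in\mathfrak{X}(M)$. Since $\Theta = d\theta\circ h$ already annihilates every vector with a vertical component and horizontal lifts locally span the horizontal distribution $H$, this promotes pointwise vanishing on lifts to $\Theta\equiv 0$ on all of $T(F(M))$. The flat row is strictly parallel: Theorem 4.7 identifies $R=0$ with the classical flatness identity $\nabla_X\nabla_Y Z-\nabla_Y\nabla_X Z-\nabla_{[X,Y]}Z=0$, and the third formula of Definition 4.6, together with the horizontality $\Omega=d\omega\circ h$ from Proposition 4.2, upgrades this to $R=0 \Longleftrightarrow \Omega=0$ by the same argument.

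The main obstacle, minor as it is, lies in the last step of the torsion and flat rows: converting pointwise vanishing of $\Theta$ or $\Omega$ on horizontal lifts of base vector fields into vanishing of these 2-forms on arbitrary tangent vectors of $F(M)$. Once one invokes the horizontality built into their definitions via the projector $h$, this reduces to the observation that horizontal lifts span $H$ locally, and the remainder is pure bookkeeping with formulas already established in the excerpt.
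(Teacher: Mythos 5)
Your argument is correct and is exactly the intended derivation: the paper states this corollary without proof (it is part of the review material of Sections 2--4), and the natural justification is precisely your chain --- the defining formulas $\nabla_{X}W=u\widetilde{X}(\theta(\widetilde{W}))$, $T(X,Y)=u(\Theta(\widetilde{X},\widetilde{Y}))$, $R(X,Y)Z=u(\Omega(\widetilde{X},\widetilde{Y})u^{-1}(Z))$ together with Theorem 4.9 for the left equivalences, and injectivity of $u$ plus the horizontality $\Theta=d\theta\circ h$, $\Omega=d\omega\circ h$ and the fact that horizontal lifts exhaust $H_{u}$ pointwise for the right ones. Note only that the results you cite as Definition 4.6 and Theorem 4.7 are numbered Definition 4.8 and Theorem 4.9 in the paper's numbering.
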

Based on these results, there are simple approaches to determine whether
a curve $\gamma$ on $(M,\nabla)$ is a geodesic, and whether $(M,\nabla)$
is torsion free, flat or not.

\section{$\alpha$-structure on Frame Bundles over Statistical Manifolds}

Throughout this section, we let $S=\{p(x;\theta|\theta\in\Theta)\}$
be an $n$-dimensional statistical manifold with coordinates charts
$\{(U_{\beta},x_{\beta}^{i})|\beta\in J\}$. Moreoever, Define $e_{i}^{\beta}:=\frac{\partial}{\partial x_{\beta}^{i}}$
and $\omega_{\beta}^{i}:=dx_{\beta}^{i}$ , which is the dual 1-form
of $e_{i}^{\beta}$ on $U_{\beta}$, $\forall1\leq i\leq n$. Then,
let $(\omega_{\beta})_{j}^{k}:=(\Gamma_{\beta})_{ji}^{k}\omega_{\beta}^{i}$
denote the connection form of the Riemannian connection $\nabla$.
\begin{defn}
The $\alpha$-connection form is defined by
\[
(\omega_{\beta}^{(\alpha)})_{j}^{k}:=(\Gamma_{\beta}^{(\alpha)})_{ji}^{k}\omega_{\beta}^{i},
\]
which is a $GL(n;\mathbb{R})$-valued-1-form on $U_{\beta}$. \end{defn}
\begin{rem}
The indices here are of different meanings. The super index $\alpha$
with parentheses is the same index with respect to $\alpha$-connection
$\nabla^{\left(\alpha\right)}$, while the lower index $\beta$ follows
from the index of coordinates $\left\{ \left(U_{\beta},x_{\beta}^{i}\right)\right\} $. \end{rem}
\begin{defn}
Let $F(S)$ be the frame bundle over $S$ with local trivialization
$\{(U_{\beta},\phi_{\beta},\Phi_{\beta})|\beta\in J\}$. Define
\[
\widetilde{\omega}_{\beta}^{(\alpha)}(u):=Ad(\phi_{\beta}^{-1})\circ\pi^{*}\omega_{\beta}^{(\alpha)}(u)+\phi_{\beta}^{*}\theta(u),\ u\in\pi^{-1}(U_{\beta}).
\]
Then, by Theorem 3.9, $\widetilde{\omega}^{(\alpha)}:=(\widetilde{\omega}_{\beta}^{(\alpha)})$
is a well defined $GL(n;\mathbb{R})$-valued-1-form globally on $F(S)$.
Hence, there exists a unique connection on $F(S)$ with $\widetilde{\omega}^{(\alpha)}$
as its connection form, which is denoted by $H^{(\alpha)}$. Now,
$(H^{(\alpha)},\widetilde{\omega}^{(\alpha)})$ is a family of connections
on the principal bundle $F(S)$.
\end{defn}
With such connections on $F(S)$, geometric structures can be defined
as that in Section 4.
\begin{defn}
Let $(F(s),H^{(\alpha)},\widetilde{\omega}^{(\alpha)})$ be the frame
bundle with respect to $\alpha$-connection over $S$.
\[
\Theta^{(\alpha)}:=d\theta\circ h^{(\alpha)}
\]
and
\[
\Omega^{(\alpha)}:=d\widetilde{\omega}^{(\alpha)}\circ h^{(\alpha)}
\]
are called the $\alpha$-torsion form and $\alpha$-curvature form
on $F(S)$, respectively.
\end{defn}

\begin{defn}
$\forall\xi\in\mathbb{R}^{n}$, a vector field $H^{(\alpha)}(\xi):\, F(S)\rightarrow H^{(\alpha)}$
defined by
\[
H^{(\alpha)}(\xi)_{u}:=\pi_{*}^{-1}(u\xi)
\]
is called the fundamental $\alpha$-horizontal vector field, where
$\pi_{*}:H_{u}^{(\alpha)}\rightarrow T_{\pi(u)}M$ is linear isomorphism.
\end{defn}
This definition is an analog to definition 4.4, corresponding to different
connections on frame bundles.

Properties of $GL\left(n;\mathbb{R}\right)$ and direct computation
give the following lemma
\begin{lem}
Denote by $(F(S),H^{(\alpha)},\widetilde{\omega}^{(\alpha)},\Theta^{(\alpha)},\Omega^{(\alpha)})$
the frame bundle with $\alpha$-connection, $(H^{(\alpha)},\widetilde{\omega}^{(\alpha)})$,
$\alpha$-torsion form $\Theta^{(\alpha)}$ and $\alpha$-curvature
form $\Omega^{(\alpha)}$. Then we have \\
(1) $\theta(H^{(\alpha)}(\xi))=\xi$, $\xi\in\mathbb{R}^{n};$\\
(2) $R_{g*}(H^{(\alpha)}(\xi)_{u})=H^{(\alpha)}(g^{-1}\xi)_{ug}$,
$g\in GL(n;\mathbb{R}^{n}),\ \xi\in\mathbb{R}^{n};$\\
(3) $[\tau(A),H^{(\alpha)}(\xi)]=H^{(\alpha)}(A\xi)$, $A\in gl(n;\mathbb{R}^{n}),\ \xi\in\mathbb{R}^{n}.$
\end{lem}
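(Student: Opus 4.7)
The proof is essentially three standard computations built from the definitions; none of the three parts requires any fact that is special to the $\alpha$-connection, just the right-invariance of the horizontal distribution and the defining properties of the canonical form and the fundamental vector fields. I would handle them in the order listed.

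For (1), I would simply unwind definitions. By Definition 4.2, $\theta_{u}(Y_{u})=u^{-1}(\pi_{*}Y_{u})$, and by Definition 5.5, $H^{(\alpha)}(\xi)_{u}$ is the unique vector in $H^{(\alpha)}_{u}$ with $\pi_{*}H^{(\alpha)}(\xi)_{u}=u\xi$. Composing gives $\theta(H^{(\alpha)}(\xi))_{u}=u^{-1}(u\xi)=\xi$. No obstacle here.

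For (2), the idea is that both sides are horizontal vectors at $ug$ with the same image under $\pi_{*}$. Since $H^{(\alpha)}$ is a connection (Definition 3.6), $R_{g*}(H^{(\alpha)}_{u})=H^{(\alpha)}_{ug}$, so $R_{g*}(H^{(\alpha)}(\xi)_{u})\in H^{(\alpha)}_{ug}$. Applying $\pi_{*}$ and using $\pi\circ R_{g}=\pi$ gives $\pi_{*}R_{g*}(H^{(\alpha)}(\xi)_{u})=\pi_{*}H^{(\alpha)}(\xi)_{u}=u\xi$. On the other hand, viewing frames as linear isomorphisms $\mathbb{R}^{n}\to T_{\pi(u)}M$ with $ug$ acting as $\xi\mapsto u(g\xi)$, I compute $\pi_{*}H^{(\alpha)}(g^{-1}\xi)_{ug}=(ug)(g^{-1}\xi)=u\xi$. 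Because $\pi_{*}:H^{(\alpha)}_{ug}\to T_{\pi(u)}M$ is a linear isomorphism, the two horizontal vectors at $ug$ must coincide.

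Part (3) is where I expect the bookkeeping to be slightly more delicate, and I would treat it as the main obstacle. The plan is to realize the bracket as a Lie derivative along the flow of $\tau(A)$. By the very definition of the fundamental vector field (Definition 3.7), the flow of $\tau(A)$ is precisely right translation by $\exp(tA)$, so
\[
[\tau(A),H^{(\alpha)}(\xi)]_{u}=\frac{d}{dt}\bigg|_{t=0}\bigl(R_{\exp(-tA)}\bigr)_{*}\bigl(H^{(\alpha)}(\xi)_{u\exp(tA)}\bigr).
\]
Applying part (2) with $g=\exp(-tA)$ at the point $w=u\exp(tA)$ (so that $w\exp(-tA)=u$) converts the right-hand side into $H^{(\alpha)}(\exp(tA)\xi)_{u}$. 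Then I differentiate at $t=0$: since $\xi\mapsto H^{(\alpha)}(\xi)_{u}=\pi_{*}^{-1}(u\xi)$ is $\mathbb{R}$-linear in $\xi$ and $\frac{d}{dt}|_{t=0}\exp(tA)\xi=A\xi$, I conclude $[\tau(A),H^{(\alpha)}(\xi)]_{u}=H^{(\alpha)}(A\xi)_{u}$. The only subtle points are keeping the sign conventions for the Lie derivative consistent and using the linearity of $\pi_{*}^{-1}$ on the horizontal subspace to move the derivative inside $H^{(\alpha)}(\cdot)$; apart from that, the identity follows from (2) and from the Lie group $GL(n;\mathbb{R})$ acting linearly on $\mathbb{R}^{n}$.
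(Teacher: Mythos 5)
Your proof is correct: part (1) is the immediate unwinding of Definitions 4.3 and 5.5, part (2) is the standard argument comparing two horizontal vectors at $ug$ with the same $\pi_{*}$-image, and part (3) correctly realizes the bracket as the Lie derivative along the flow $R_{\exp(tA)}$ of $\tau(A)$, reduces it to (2), and differentiates using linearity of $\xi\mapsto\pi_{*}^{-1}(u\xi)$. The paper itself gives no proof of this lemma, asserting only that it follows from properties of $GL(n;\mathbb{R})$ and direct computation, and your argument is exactly the direct computation being alluded to, so there is nothing to compare beyond noting that you have supplied the details the paper omits.
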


Also, following Proposition 4.6 and necessary computations, it is
not hard to obtain the next proposition.
\begin{prop}
Let $(F(S),H^{(\alpha)},\widetilde{\omega}^{(\alpha)},\Theta^{(\alpha)},\Omega^{(\alpha)})$
be the frame bundle over $S$. Then we have
\[
\Theta^{(\alpha)}=d\theta+\widetilde{\omega}^{(\alpha)}\wedge\theta,
\]
 and
\[
\Omega^{(\alpha)}=d\widetilde{\omega}^{(\alpha)}+\widetilde{\omega}^{(\alpha)}\wedge\widetilde{\omega}^{(\alpha)}.
\]

\end{prop}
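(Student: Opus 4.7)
The plan is to recognize both equations as direct instances of the general first and second structure equations (Propositions 4.6 and 4.2) applied to the specific connection $(H^{(\alpha)},\widetilde{\omega}^{(\alpha)})$ on the frame bundle $F(S)$. By Definition 5.3, $\widetilde{\omega}^{(\alpha)}$ is a legitimate connection $1$-form on $F(S)$ with horizontal distribution $H^{(\alpha)}=\ker\widetilde{\omega}^{(\alpha)}$, and $\theta$ is the canonical $\mathbb{R}^{n}$-valued $1$-form from Definition 4.3, so nothing in the ambient setup of Section 4 has changed beyond the choice of connection.

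First I would handle the second structure equation. Proposition 4.2 gives $d\widetilde{\omega}^{(\alpha)}\circ h^{(\alpha)}=d\widetilde{\omega}^{(\alpha)}+\frac{1}{2}[\widetilde{\omega}^{(\alpha)},\widetilde{\omega}^{(\alpha)}]$. Because $\widetilde{\omega}^{(\alpha)}$ takes values in the matrix Lie algebra $gl(n;\mathbb{R})$, the graded bracket $\frac{1}{2}[\widetilde{\omega}^{(\alpha)},\widetilde{\omega}^{(\alpha)}]$ coincides with the matrix-multiplication wedge $\widetilde{\omega}^{(\alpha)}\wedge\widetilde{\omega}^{(\alpha)}$, so the left-hand side $\Omega^{(\alpha)}$ of Definition 5.4 equals the advertised right-hand side. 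The first structure equation is the corresponding consequence of Proposition 4.6 with $\omega$ replaced by $\widetilde{\omega}^{(\alpha)}$.

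If a direct verification is desired, I would evaluate both sides on a frame of $T(F(S))$ built from fundamental horizontal vector fields $H^{(\alpha)}(\xi)$ and fundamental vertical vector fields $\tau(A)$. There are three cases. On a horizontal pair $(H^{(\alpha)}(\xi),H^{(\alpha)}(\eta))$, the $\widetilde{\omega}^{(\alpha)}$-terms on both sides vanish identically and each equation reduces to a tautology. On a mixed pair $(H^{(\alpha)}(\xi),\tau(A))$, $\theta$ and $\widetilde{\omega}^{(\alpha)}$ respectively annihilate the vertical and horizontal slot, while $d\theta$ and $d\widetilde{\omega}^{(\alpha)}$ reduce to bracket terms computed via Lemma 5.6: $\theta(H^{(\alpha)}(\xi))=\xi$ and $[\tau(A),H^{(\alpha)}(\xi)]=H^{(\alpha)}(A\xi)$; the contribution $A\xi$ from the bracket term of $d\theta$ exactly cancels the $-A\xi$ coming from $\widetilde{\omega}^{(\alpha)}\wedge\theta$, and the corresponding cancellation for the second equation follows because $[H^{(\alpha)}(\xi),\tau(A)]$ is purely horizontal. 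On a vertical pair $(\tau(A),\tau(B))$, the fundamental vector fields satisfy $[\tau(A),\tau(B)]=\tau([A,B])$, giving $d\widetilde{\omega}^{(\alpha)}(\tau(A),\tau(B))=-[A,B]$, which cancels against $\widetilde{\omega}^{(\alpha)}\wedge\widetilde{\omega}^{(\alpha)}(\tau(A),\tau(B))=AB-BA$; the first equation reduces to $0=0$.

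The main obstacle is keeping careful track of signs in the mixed case and confirming the identification $\frac{1}{2}[\cdot,\cdot]=\cdot\wedge\cdot$ for $gl(n;\mathbb{R})$-valued forms; once these conventions are pinned down, the result is a routine bookkeeping exercise on the three cases above.
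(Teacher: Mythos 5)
Your proposal is correct and follows the same route the paper intends: the paper offers no written proof beyond citing Proposition 4.6 (and implicitly 4.2) with "necessary computations," i.e.\ the general structure equations specialized to the connection $(H^{(\alpha)},\widetilde{\omega}^{(\alpha)})$, which is exactly your first step. Your case-by-case check on fundamental horizontal and vertical fields via Lemma 5.6 simply supplies, correctly, the computations the paper leaves implicit.
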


Now, the main theorem follows.

\begin{thm}
$\forall X,Y,Z\in\mathfrak{X}(M)$, we have
\begin{equation}
\nabla_{X}^{(\alpha)}Y=u(\widetilde{X}^{(\alpha)}(\theta(\widetilde{Y}^{(\alpha)}))),
\end{equation}
\begin{equation}
T^{(\alpha)}(X,Y)=u(\Theta^{(\alpha)}(\widetilde{X},\widetilde{Y}))=\nabla_{X}^{(\alpha)}Y-\nabla_{Y}^{(\alpha)}X-[X,Y],
\end{equation}
and
\begin{equation}
R^{(\alpha)}(X,Y)Z=u(\Omega^{(\alpha)}(\widetilde{X},\widetilde{Y})u^{-1}(Z))=\nabla_{X}^{(\alpha)}\nabla_{Y}^{(\alpha)}Z-\nabla_{Y}^{(\alpha)}\nabla_{X}^{(\alpha)}Z-\nabla_{[X,Y]}^{(\alpha)}Z,
\end{equation}
 where $\widetilde{X}^{(\alpha)}$ is the horizontal lift of $X$
corresponding to connection $H^{(\alpha)}$. \end{thm}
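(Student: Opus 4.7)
The plan is to deduce Theorem 5.8 from Theorems 4.7 and 4.8 by identifying the connection $(H^{(\alpha)},\widetilde{\omega}^{(\alpha)})$ on $F(S)$ given in Definition 5.3 as precisely the connection that corresponds, under the $1$-$1$ correspondence of Corollary 3.11, to the linear connection $\nabla^{(\alpha)}$ on $S$. Once this identification is made, equation (5.1) is the defining formula for that correspondence, and equations (5.2) and (5.3) become direct applications of Theorems 4.7 and 4.8 to the principal bundle $(F(S),H^{(\alpha)},\widetilde{\omega}^{(\alpha)})$ together with its torsion form $\Theta^{(\alpha)}$ and curvature form $\Omega^{(\alpha)}$ (whose local structure equations are already supplied by Proposition 5.7).

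First I would verify that Definition 5.3 does produce a global connection form on $F(S)$: starting from the local 1-form $(\omega_{\beta}^{(\alpha)})^{k}_{j}=(\Gamma_{\beta}^{(\alpha)})^{k}_{ji}\,\omega_{\beta}^{i}$ and the fact from Theorem 3.4 that the transition functions of $F(S)$ are the Jacobians of coordinate changes, a direct check shows that the $\widetilde{\omega}^{(\alpha)}_{\beta}$ satisfy condition $(5)$ of Theorem 3.9, so they glue to a single $GL(n;\mathbb{R})$-valued 1-form $\widetilde{\omega}^{(\alpha)}$ on $F(S)$. This produces a bona fide connection $H^{(\alpha)}=\ker\widetilde{\omega}^{(\alpha)}$, right-invariant and complementary to the vertical distribution, to which the machinery of Section 4 applies word-for-word.

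Next, I would establish (5.1) by a local computation in a chart $(U_{\beta},x^{i}_{\beta})$. Write $X=X^{i}e^{\beta}_{i}$, $Y=Y^{j}e^{\beta}_{j}$, take a frame $u\in\pi^{-1}(\pi(u))$, and use that the horizontal lift $\widetilde{Y}^{(\alpha)}$ is the unique lift with $\pi_{*}\widetilde{Y}^{(\alpha)}=Y$ and $\widetilde{\omega}^{(\alpha)}(\widetilde{Y}^{(\alpha)})=0$. From Definition 4.3 one has $\theta(\widetilde{Y}^{(\alpha)})=u^{-1}(Y)$, and differentiating this $\mathbb{R}^{n}$-valued function along $\widetilde{X}^{(\alpha)}$ produces
\[
u\bigl(\widetilde{X}^{(\alpha)}(\theta(\widetilde{Y}^{(\alpha)}))\bigr)=\bigl(X^{i}\partial_{i}Y^{k}+(\Gamma_{\beta}^{(\alpha)})^{k}_{ji}X^{i}Y^{j}\bigr)e^{\beta}_{k}=\nabla_{X}^{(\alpha)}Y,
\]
where the Christoffel contribution is extracted from the $\phi_{\beta}^{*}\theta$ piece of $\widetilde{\omega}^{(\alpha)}_{\beta}$ by tracking how $\phi_{\beta}^{-1}$ acts on the frame. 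With (5.1) in hand, the second equalities in (5.2) and (5.3) follow from formally identical arguments: apply $u$ to the structure equations of Proposition 5.7 evaluated on horizontal lifts, expand using (5.1) and the Lie bracket identities for horizontal and fundamental vector fields (Lemma 5.6, together with the fact that $\widetilde{\omega}^{(\alpha)}$ vanishes on horizontal lifts), and rearrange to obtain $\nabla^{(\alpha)}_{X}Y-\nabla^{(\alpha)}_{Y}X-[X,Y]$ and the usual curvature expression.

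The main obstacle is the local bookkeeping in the identification step: one must correctly track the $\operatorname{Ad}(\phi_{\beta}^{-1})\circ\pi^{*}\omega_{\beta}^{(\alpha)}$ term and the canonical $\phi_{\beta}^{*}\theta$ term of $\widetilde{\omega}_{\beta}^{(\alpha)}$ when imposing the horizontality condition $\widetilde{\omega}^{(\alpha)}(\widetilde{Y}^{(\alpha)})=0$, and confirm that the Christoffel symbols $(\Gamma_{\beta}^{(\alpha)})^{k}_{ji}$ reappear with the correct indices and signs. Once this calculation is pinned down at a single point, well-definedness from Theorem 3.9 propagates the identity globally, and the remainder of the theorem reduces entirely to formal manipulations already carried out in the $\alpha=0$ case of Theorem 4.7.
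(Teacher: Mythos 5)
Your proposal is correct and follows essentially the same route as the paper: equation (5.1) comes from the correspondence of Corollary 3.11 together with Definition 4.8 (you in fact make explicit the local Christoffel-symbol verification that the connection of Definition 5.3 induces $\nabla^{(\alpha)}$, which the paper leaves implicit), and (5.2)--(5.3) are obtained by the same expansion of $d\theta$ and $d\widetilde{\omega}^{(\alpha)}$ on horizontal lifts using the vanishing of $\widetilde{\omega}^{(\alpha)}$ on $H^{(\alpha)}$, the bracket identities, and the fundamental-vector-field computation, which is exactly the calculation the paper writes out (with Lemma 5.9 covering the step $\widetilde{[X,Y]}=h([\widetilde{X},\widetilde{Y}])$). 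One small correction: the general results you mean to invoke for the second equalities are Theorem 4.9 and Definition 4.8, not ``Theorems 4.7 and 4.8'' (4.7 is the Bianchi identities and 4.8 is a definition), but since you sketch the expansion yourself this is only a citation slip, not a gap.
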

\begin{proof}
Directly from Corollary 3.11 and Definition 4.8,
\[
\nabla_{X}^{(\alpha)}Y=u(\widetilde{X}^{(\alpha)}(\theta(\widetilde{Y}^{(\alpha)}))).
\]
For any $p\in M$, and $u\in\pi^{-1}(p)$, we get
\begin{align*}
T^{(\alpha)}(X,Y) & =u(\Theta^{(\alpha)}(\widetilde{X}_{u}^{(\alpha)},\widetilde{Y}_{u}^{(\alpha)}))\\
 & =u(d\theta(h^{(\alpha)}(\widetilde{X}_{u}^{(\alpha)}),h^{(\alpha)}(\widetilde{Y}_{u}^{(\alpha)})))\\
 & =u(d\theta(\widetilde{X}_{u}^{(\alpha)},\widetilde{Y}_{u}^{(\alpha)}))\\
 & =u(\widetilde{X}_{u}^{(\alpha)}(\theta(\widetilde{Y}^{(\alpha)})))-u(\widetilde{Y}_{u}^{(\alpha)}(\theta(\widetilde{X}^{(\alpha)})))-u(\theta([\widetilde{X}^{(\alpha)},\widetilde{Y}^{(\alpha)}]))\\
 & =\nabla_{X}^{(\alpha)}Y-\nabla_{Y}^{(\alpha)}X-[X,Y],
\end{align*}
and
\begin{align*}
\pi_{*}(H^{(\alpha)}(\widetilde{Y}^{(\alpha)}(\theta(\widetilde{Z}))_{u})) & =\pi_{*}(\pi_{*}^{-1}(u(\widetilde{Y}_{u}^{(\alpha)}(\theta(\widetilde{Z}^{(\alpha)})))))\\
 & =u(\widetilde{Y}_{u}^{(\alpha)}(\theta(\widetilde{Z}^{(\alpha)})))\\
 & =\nabla_{Y}^{(\alpha)}Z.
\end{align*}
 This implies that $H^{(\alpha)}(\widetilde{Y}^{(\alpha)}(\theta(\widetilde{Z}))_{u})$
is the horizontal lift of $W:=\nabla_{Y}^{(\alpha)}Z$, then,
\begin{align*}
\nabla_{X}^{(\alpha)}\nabla_{Y}^{(\alpha)}Z & =u(\widetilde{X}_{u}^{(\alpha)}(\theta(\widetilde{W}^{(\alpha)})))\\
 & =u(\widetilde{X}_{u}^{(\alpha)}(\theta(H^{(\alpha)}(\widetilde{Y}^{(\alpha)}(\theta(\widetilde{Z}^{(\alpha)}))))))\\
 & =u(\widetilde{X}_{u}(\widetilde{Y}(\theta(\widetilde{Z}^{(\alpha)})))).
\end{align*}
 By computing the right side of (5.3),
\begin{align}
 & \quad\nabla_{X}^{(\alpha)}\nabla_{Y}^{(\alpha)}Z-\nabla_{Y}^{(\alpha)}\nabla_{X}^{(\alpha)}Z-\nabla_{[X,Y]}^{(\alpha)}Z\notag\\
 & =u(\widetilde{X}_{u}^{\left(\alpha\right)}(\widetilde{Y}^{\left(\alpha\right)}(\theta(\widetilde{Z}^{(\alpha)}))))-u(\widetilde{Y}_{u}^{\left(\alpha\right)}(\widetilde{X}^{\left(\alpha\right)}(\theta(\widetilde{Z}^{(\alpha)}))))-u(\widetilde{[\widetilde{X}^{(\alpha)},\widetilde{Y}^{(\alpha)}]}^{(\alpha)}(\theta(\widetilde{Z}^{(\alpha)})))\\
 & =u((\widetilde{X}_{u}^{(\alpha)}(\widetilde{Y}^{(\alpha)})-\widetilde{Y}_{u}^{(\alpha)}(\widetilde{X}^{(\alpha)})-h([\widetilde{X}^{(\alpha)},\widetilde{Y}^{(\alpha)}]))\theta(\widetilde{Z}^{(\alpha)}))\\
 & =u(v([\widetilde{X}^{\left(\alpha\right)},\widetilde{Y}^{(\alpha)}])(\theta({\widetilde{Z}^{(\alpha)}}))),\notag
\end{align}
where $v([\widetilde{X}^{(\alpha)},\widetilde{Y}^{(\alpha)}])\in V$
indicates $\exists A\in gl(n;\mathbb{R})$ s.t. $\tau(A)=v([\widetilde{X}^{(\alpha)},\widetilde{Y}^{(\alpha)}])$.
\begin{align*}
v([\widetilde{X}^{(\alpha)},\widetilde{Y}^{(\alpha)}])(\theta(\widetilde{Z}^{(\alpha)})) & =\tau(A)(\theta(\widetilde{Z}^{(\alpha)}))\\
 & =\frac{d}{dt}\biggl{|}_{t=0}(\theta(\widetilde{Z}^{(\alpha)})(u\exp tA))\\
 & =\frac{d}{dt}\biggl{|}_{t=0}(\theta(\widetilde{Z}^{(\alpha)})\circ R_{\exp tA}(u))\\
 & =\frac{d}{dt}\biggl{|}_{t=0}((\exp tA)^{-1}\theta(\widetilde{Z}^{(\alpha)}(u)))\\
 & =-A\theta(\widetilde{Z}^{(\alpha)})(u).
\end{align*}
Computing the left side of (5.3) provides
\begin{align*}
R^{(\alpha)}(X,Y)Z & =u(\Omega^{(\alpha)}(\widetilde{X}_{u}^{(\alpha)},\widetilde{Y}_{u}^{(\alpha)})u^{-1}(Z))\\
 & =u(d\widetilde{\omega}^{(\alpha)}(\widetilde{X}_{u}^{(\alpha)},\widetilde{Y}_{u}^{(\alpha)})u^{-1}(Z))\\
 & =u(-\widetilde{\omega}^{(\alpha)}([\widetilde{X}^{(\alpha)},\widetilde{Y}^{(\alpha)}])(u)u^{-1}(Z))\\
 & =-u(\widetilde{\omega}\circ\tau(A)u^{-1}(Z))\\
 & =-u(A\theta(\widetilde{Z}_{u}^{(\alpha)})).
\end{align*}
Therefore we obtain
\[
R^{(\alpha)}(X,Y)Z=u(\Omega^{(\alpha)}(\widetilde{X},\widetilde{Y})u^{-1}(Z))
\]
as desired.
\end{proof}
In addition, the following lemma verifies the step from (5.4) to (5.5).
\begin{lem}
For any $X,Y\in\mathfrak{X}(M)$,
\[
\widetilde{[X,Y]}=h([\widetilde{X},\widetilde{Y}]).
\]
\end{lem}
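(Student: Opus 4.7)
The plan is to show that $h([\widetilde X,\widetilde Y])$ satisfies the two defining properties of the horizontal lift of $[X,Y]$, namely being horizontal and projecting onto $[X,Y]$ under $\pi_*$, and then invoke the uniqueness of horizontal lifts from Definition 3.13.

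First I would decompose the bracket of the horizontal lifts into its horizontal and vertical parts, writing $[\widetilde X,\widetilde Y]=h([\widetilde X,\widetilde Y])+v([\widetilde X,\widetilde Y])$. The horizontality of $h([\widetilde X,\widetilde Y])$ is immediate from the definition of the projection $h$, so the only nontrivial claim is the identity $\pi_*\bigl(h([\widetilde X,\widetilde Y])\bigr)=[X,Y]$. Since $V=\ker\pi_*$ by Definition 3.5, vertical vectors are killed by $\pi_*$, which gives $\pi_*\bigl(h([\widetilde X,\widetilde Y])\bigr)=\pi_*\bigl([\widetilde X,\widetilde Y]\bigr)$.

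Next I would use the standard fact that $\pi$-relatedness of vector fields is preserved under the Lie bracket. Because $\widetilde X$ and $\widetilde Y$ are the horizontal lifts of $X$ and $Y$, we have $\pi_*\widetilde X=X\circ\pi$ and $\pi_*\widetilde Y=Y\circ\pi$, so $\widetilde X$ is $\pi$-related to $X$ and $\widetilde Y$ is $\pi$-related to $Y$. This yields $\pi_*[\widetilde X,\widetilde Y]=[X,Y]\circ\pi$. Combining with the previous step, $h([\widetilde X,\widetilde Y])$ is a horizontal vector field on $P$ that projects to $[X,Y]$.

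By the uniqueness statement in Definition 3.13 (and the linear isomorphism $\pi_*:H_p\to T_{\pi(p)}M$), the horizontal lift of $[X,Y]$ is the unique horizontal vector field on $P$ projecting to $[X,Y]$, so $h([\widetilde X,\widetilde Y])=\widetilde{[X,Y]}$. I do not expect any serious obstacle here; the only subtle point is explicitly noting why $\pi$-relatedness holds pointwise so that the bracket identity $\pi_*[\widetilde X,\widetilde Y]=[X,Y]\circ\pi$ applies, and ensuring that the projection $h$ is well defined as a smooth vector field (which is guaranteed by condition (3) of Definition 3.6).
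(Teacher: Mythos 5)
Your proposal is correct and takes essentially the same route as the paper: project with $\pi_*$, use that the vertical part lies in $\ker\pi_*$ and that horizontal lifts are $\pi$-related to their base fields so $\pi_*[\widetilde X,\widetilde Y]=[X,Y]$, then conclude. You additionally make explicit the final appeal to uniqueness of horizontal lifts, which the paper leaves implicit.
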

\begin{proof}
Direct computation shows
\begin{align*}
\pi_{*}(h([\widetilde{X},\widetilde{Y}])) & =\pi_{*}(h([\widetilde{X},\widetilde{Y}]))+0\\
 & =\pi_{*}(h([\widetilde{X},\widetilde{Y}]))+\pi_{*}(v([\widetilde{X},\widetilde{Y}]))\\
 & =\pi_{*}(h([\widetilde{X},\widetilde{Y}])+v([\widetilde{X},\widetilde{Y}]))\\
 & =\pi_{*}([\widetilde{X},\widetilde{Y}])\\
 & =[\pi_{*}(\widetilde{X}),\pi_{*}(\widetilde{Y})]\\
 & =[X,Y].
\end{align*}

\end{proof}
As a consequence of the Theorem 5.8 and analog of Corollary 4.10,
the following corollary holds.
\begin{cor}
Let $(F(S),H^{(\alpha)},\Theta^{(\alpha)},\Omega^{(\alpha)})$ be
the frame bundle with $\alpha$-structure over $(S,\nabla^{(\alpha)},T^{(\alpha)},R^{(\alpha)})$.
\begin{eqnarray*}
\gamma:(-\epsilon,\epsilon)\rightarrow M\text{\ is\ an\ }\alpha\text{-geodesic} & \Longleftrightarrow\nabla_{\gamma'}^{(\alpha)}\gamma'=0\Longleftrightarrow & \widetilde{\gamma}'(\theta(\widetilde{\gamma}'^{(\alpha)}))=0;\\
(M,\nabla^{(\alpha)})\text{\ is\ torsion\ free} & \Longleftrightarrow\,\,\, T^{(\alpha)}=0\,\,\,\Longleftrightarrow & \Theta^{(\alpha)}=0;\\
(M,\nabla^{(\alpha)})\text{\ is\ flat} & \Longleftrightarrow\,\,\, R^{(\alpha)}=0\,\,\,\Longleftrightarrow & \Omega^{(\alpha)}=0.
\end{eqnarray*}

\end{cor}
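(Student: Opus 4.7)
The plan is to deduce all three lines of equivalences directly from Theorem 5.8, following exactly the template of Corollary 4.10. The key structural facts I will exploit are that the frame $u$ is by definition a linear isomorphism $u : \mathbb{R}^{n} \to T_{\pi(u)}M$ (so $u(\xi) = 0$ iff $\xi = 0$), and that the horizontal lift $X \mapsto \widetilde{X}^{(\alpha)}$ is a bijection between $\mathfrak{X}(M)$ and the space of right-invariant horizontal vector fields on $F(S)$, with $\pi_{*} : H_{u}^{(\alpha)} \to T_{\pi(u)}M$ a linear isomorphism at every point.

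For the first line, the equivalence with $\nabla_{\gamma'}^{(\alpha)}\gamma' = 0$ is simply Definition 2.7. Substituting $X = Y = \gamma'$ in formula (5.1) yields $\nabla_{\gamma'}^{(\alpha)}\gamma' = u(\widetilde{\gamma}'^{(\alpha)}(\theta(\widetilde{\gamma}'^{(\alpha)})))$, and the invertibility of $u$ furnishes the remaining equivalence. For the second and third lines, the first $\iff$ is in each case the classical definition of torsion freeness / flatness, while the second comes from formulas (5.2) and (5.3). The forward directions $\Theta^{(\alpha)} = 0 \Rightarrow T^{(\alpha)} = 0$ and $\Omega^{(\alpha)} = 0 \Rightarrow R^{(\alpha)} = 0$ are immediate by plugging in.

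The only step needing care is the reverse direction: showing that $T^{(\alpha)} \equiv 0$ on $M$ forces $\Theta^{(\alpha)} \equiv 0$ on all pairs of tangent vectors over $F(S)$, and analogously for $\Omega^{(\alpha)}$ vs.\ $R^{(\alpha)}$. Here I would invoke the defining relations $\Theta^{(\alpha)} = d\theta \circ h^{(\alpha)}$ and $\Omega^{(\alpha)} = d\widetilde{\omega}^{(\alpha)} \circ h^{(\alpha)}$: both forms factor through the horizontal projection, so they vanish automatically on any vector with trivial horizontal component, and it is enough to test them on horizontal vectors. Since every horizontal vector at $u$ is $\widetilde{X}^{(\alpha)}_{u}$ for some $X \in \mathfrak{X}(M)$, vanishing of $T^{(\alpha)}(X,Y) = u(\Theta^{(\alpha)}(\widetilde{X}^{(\alpha)},\widetilde{Y}^{(\alpha)}))$ for all $X,Y$ forces $\Theta^{(\alpha)} = 0$. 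For the curvature case one additionally uses that $u^{-1}$ maps $T_{\pi(u)}M$ bijectively onto $\mathbb{R}^{n}$, so as $Z$ ranges over $T_{\pi(u)}M$ the vector $u^{-1}(Z)$ exhausts all possible third inputs, and $R^{(\alpha)} \equiv 0$ propagates to $\Omega^{(\alpha)} \equiv 0$. This closes each of the three equivalences; the main obstacle, as described, is essentially bookkeeping to confirm that the lifts cover all horizontal directions, which is the content of Definition 3.12.
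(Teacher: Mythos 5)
Your proposal is correct and follows exactly the route the paper intends: the paper states this corollary without proof, remarking only that it follows from Theorem 5.8 by analogy with Corollary 4.10, which is precisely your strategy. Your additional bookkeeping (invertibility of $u$, the fact that $\Theta^{(\alpha)}$ and $\Omega^{(\alpha)}$ factor through $h^{(\alpha)}$ so it suffices to test on horizontal lifts) correctly fills in the details the paper leaves implicit.
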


\section{Application on Normal Distribution manifold.}

Let us focus on a concrete case: manifold of one-dimensional normal
distributions, defined by
\[
S:=\{p(x;\theta^{1},\theta^{2})|(\theta^{1},\theta^{2})\in\mathbb{R}\times\mathbb{R}_{+}\},
\]
where $\theta=(\theta^{1},\theta^{2})=(\mu,\sigma)$ are coordinates
and
\[
p(x;\theta^{1},\theta^{2})=p(x;\mu,\sigma)=\frac{1}{\sqrt{2\pi}\sigma}\exp\{-\frac{(x-\mu)^{2}}{2\sigma^{2}}\}
\]
is the probability density function of a one-dimensional normal distribution
with expectation $\mu$ and standard derivation $\sigma$. For the
geometric structures on it, directly we could compute that
\[
l(x,\theta)=\log p(x,\theta)=-\frac{(x-\mu)^{2}}{2\sigma^{2}}-\log(\sqrt{2\pi}\sigma),
\]
\[
\begin{cases}
\partial_{1}l=\frac{\partial}{\partial\mu}l=\frac{x-\mu}{\sigma^{2}},\\
\partial_{2}l=\frac{\partial}{\partial\sigma}l=\frac{(x-\mu)^{2}}{\sigma^{3}}-\frac{1}{\sigma},\\
\partial_{1}\partial_{1}l=-\frac{1}{\sigma^{2}},\\
\partial_{1}\partial_{2}l=-\frac{2(x-\mu)}{\sigma^{3}},\\
\partial_{2}\partial_{2}l=-\frac{3(x-\mu)^{2}}{\sigma^{4}}+\frac{1}{\sigma^{2}}.
\end{cases}
\]
and
\[
\begin{cases}
g_{11}=E[\partial_{1}l\partial_{1}l]=E[\frac{(x-\mu)^{2}}{\sigma^{4}}]=\frac{1}{\sigma^{2}},\\
g_{12}=g_{21}=E[\partial_{1}l\partial_{2}l]=E[\frac{x-\mu}{\sigma^{2}}(\frac{(x-\mu)^{2}}{\sigma^{3}}-\frac{1}{\sigma})]=0,\\
g_{22}=E[\partial_{2}l\partial_{2}l]=E[(\frac{(x-\mu)^{2}}{\sigma^{3}}-\frac{1}{\sigma})^{2}]=\frac{2}{\sigma^{2}}.
\end{cases}
\]
Namely, the Riemannian metric is given by
\[
g=(g_{ij})=\begin{bmatrix}\frac{1}{\sigma^{2}} & 0\\
0 & \frac{2}{\sigma^{2}}
\end{bmatrix}.
\]
Consequently, one can compute
\[
\begin{cases}
T_{111}=T_{122}=T_{212}=T_{221}=0,\\
T_{112}=T_{121}=T_{211}=\frac{2}{\sigma^{3}},\\
T_{222}=\frac{8}{\sigma^{3}}.
\end{cases}
\]
and
\[
\begin{cases}
\Gamma_{111}^{(\alpha)}=\Gamma_{122}^{(\alpha)}=\Gamma_{212}^{(\alpha)}=\Gamma_{221}^{(\alpha)}=0,\\
\Gamma_{112}^{(\alpha)}=\frac{1-\alpha}{\sigma^{3}},\\
\Gamma_{121}^{(\alpha)}=\Gamma_{211}^{(\alpha)}=-\frac{1+\alpha}{\sigma^{3}},\\
\Gamma_{222}^{(\alpha)}=-\frac{2+4\alpha}{\sigma^{3}}.
\end{cases}
\]
to obtain the curvature tensor
\[
R_{1212}^{(\alpha)}=\frac{1-\alpha^{2}}{\sigma^{4}}.
\]

After computing the main $\alpha$-structures on $S$ above, we switch
to the study of the same structures but in the \textquotedbl{}bundle
version\textquotedbl{}, which means all of them can be calculated
on the principal bundle (the frame bundle $F(S)$ over $S$). Before
the calculation, we make some necessary preparations.

The inverse of metric matrix $g$ is
\[
g^{-1}=(g^{ij})=\begin{bmatrix}\sigma^{2} & 0\\
0 & \frac{\sigma^{2}}{2}
\end{bmatrix}.
\]
Hence, coefficients of $\alpha$-connections are
\[
\begin{cases}
(\Gamma^{(\alpha)})_{11}^{1}=(\Gamma^{(\alpha)})_{12}^{2}=(\Gamma^{(\alpha)})_{21}^{2}=(\Gamma^{(\alpha)})_{22}^{1}=0,\\
(\Gamma^{(\alpha)})_{11}^{2}=\frac{1-\alpha}{2\sigma},\\
(\Gamma^{(\alpha)})_{12}^{1}=(\Gamma^{(\alpha)})_{21}^{1}=-\frac{1+\alpha}{\sigma},\\
(\Gamma^{(\alpha)})_{22}^{2}=-\frac{1+2\alpha}{\sigma}.
\end{cases}
\]
and then follows the connection form on $S$
\[
\omega^{(\alpha)}=((\omega^{(\alpha)})_{j}^{i})=\begin{bmatrix}-\frac{1+\alpha}{\sigma}d\theta^{2} & -\frac{1+\alpha}{\sigma}d\theta^{1}\\
\frac{1-\alpha}{2\sigma}d\theta^{1} & -\frac{1+2\alpha}{\sigma}d\theta^{2}
\end{bmatrix}.
\]
Now consider the frame bundle $F(S)$ over $S$. Since $S$ has a
global coordinates neighbourhood, the bundle is trivial, i.e., $F(S)=S\times GL(2;\mathbb{R})$.
For any $u\in F(S)$, $u$ represents a basis $(e_{1},e_{2})$ of
$T_{\pi(u)}S$. If $(e_{1},e_{2})=A(\frac{\partial}{\partial\theta^{1}},\frac{\partial}{\partial\theta^{2}})$,
the coordinates of $u$ is $u=(u^{1},\dots,u^{6})$, where
\[
u^{3}=A_{11},u^{4}=A_{12},u^{5}=A_{21},u^{6}=A_{22}.
\]
Furthermore, $\pi(u)=(\theta^{1},\theta^{2})=(u^{1},u^{2})$, hence
we have
\[
\pi_{*}(\frac{\partial}{\partial u^{1}}\biggl{|}_{u})=\frac{\partial}{\partial\theta^{1}}\biggl{|}_{\pi(u)},\pi_{*}(\frac{\partial}{\partial u^{2}})\biggl{|}_{u}=\frac{\partial}{\partial\theta^{2}}\biggl{|}_{\pi(u)}.
\]
And the local trivialization is
\begin{eqnarray*}
\Phi:F\left(S\right) & \rightarrow & S\times GL\left(2;\mathbb{R}\right),\\
u & \mapsto & (u^{1},u^{2},A)=(\theta^{1},\theta^{2},A).
\end{eqnarray*}
By Definition 5.3,
\[
\widetilde{\omega}^{(\alpha)}(u)=Ad(\phi^{-1}(u))\circ\pi^{*}\omega^{(\alpha)}+\phi^{*}\theta(u).
\]
Since $u=(u^{1},u^{2},\dots,u^{6})$ and $X_{u}=X^{i}\frac{\partial}{\partial u^{i}}\bigl{|}_{u}$,
we have
\begin{align*}
\widetilde{\omega}^{(\alpha)}(X_{u}) & =Ad(\phi^{-1}(u))\circ\pi^{*}\omega^{(\alpha)}(X_{u})+\phi^{*}\theta(X_{u})\\
 & =Ad(\phi^{-1}(u))\circ\omega^{(\alpha)}(\pi_{*}(X_{u}))+\theta(\phi_{*}(X_{u}))\\
 & =A\omega^{(\alpha)}(X^{1}\frac{\partial}{\partial\theta^{1}}\biggl{|}_{(\theta^{1},\theta^{2})}+X^{2}\frac{\partial}{\partial\theta^{2}}\biggl{|}_{(\theta^{1},\theta^{2})})A^{-1}+A^{-1}B\\
 & =A\begin{bmatrix}-\frac{1+\alpha}{\sigma}X^{2} & -\frac{1+\alpha}{\sigma}X^{1}\\
\frac{1-\alpha}{2\sigma}X^{1} & -\frac{1+2\alpha}{\sigma}X^{2}
\end{bmatrix}A^{-1}+A^{-1}B,
\end{align*}
where
\[
A=\begin{bmatrix}u^{3} & u^{4}\\
u^{5} & u^{6}
\end{bmatrix},\, B=\begin{bmatrix}X^{3} & X^{4}\\
X^{5} & X^{6}
\end{bmatrix}.
\]
Hence the horizontal space is
\[
H_{u}^{(\alpha)}=\ker(\widetilde{\omega}^{(\alpha)})=\left\{ X\in T_{u}F(S)\biggl{|}A\begin{bmatrix}-\frac{1+\alpha}{\sigma}X^{2} & -\frac{1+\alpha}{\sigma}X^{1}\\
\frac{1-\alpha}{2\sigma}X^{1} & -\frac{1+2\alpha}{\sigma}X^{2}
\end{bmatrix}A^{-1}+A^{-1}B=0\right\} .
\]

In particutlar, let $(e_{1},e_{2})=(\frac{\partial}{\partial\theta^{1}}\bigl{|}_{\pi(u)},\frac{\partial}{\partial\theta^{2}}\bigl{|}_{\pi(u)})$,
then $u=(u^{1},u^{2},1,0,0,1)$ and $\phi(u)=(\theta^{1},\theta^{2},I_{2\times2})$,
where $I_{2\times2}$ is the $2\times2$ identity matrix. Furthermore,
$\forall X\in T_{u}F(S)$, $X=X^{i}\frac{\partial}{\partial u^{i}}\bigl{|}_{u}$,
we have
\[
\pi_{*}(X)=X^{1}\frac{\partial}{\partial\theta^{1}}\biggl{|}_{(\theta^{1},\theta^{2})}+X^{2}\frac{\partial}{\partial\theta^{2}}\biggl{|}_{(\theta^{1},\theta^{2})}.
\]
So, the corresponding horizontal space becomes
\[
H_{u}^{(\alpha)}=\left\{ X_{u}\in T_{u}F(S)\biggl{|}\begin{bmatrix}-\frac{1+\alpha}{\sigma}X^{2} & -\frac{1+\alpha}{\sigma}X^{1}\\
\frac{1-\alpha}{2\sigma}X^{1} & -\frac{1+2\alpha}{\sigma}X^{2}
\end{bmatrix}+\begin{bmatrix}X^{3} & X^{4}\\
X^{5} & X^{6}
\end{bmatrix}=0\right\} .
\]
Under such circumstance, the horizontal projection of $X=X^{i}\frac{\partial}{\partial u^{i}}\in T_{u}F(S)$
is
\[
h^{(\alpha)}(X)=X^{1}\frac{\partial}{\partial u^{1}}+X^{2}\frac{\partial}{\partial u^{2}}+\frac{1+\alpha}{\sigma}X^{2}\frac{\partial}{\partial u^{3}}+\frac{1+\alpha}{\sigma}X^{1}\frac{\partial}{\partial u^{4}}-\frac{1-\alpha}{2\sigma}X^{1}\frac{\partial}{\partial u^{5}}+\frac{1+2\alpha}{\sigma}X^{2}\frac{\partial}{\partial u^{6}}.
\]
Then the horizontal lift of $X=X^{i}\frac{\partial}{\partial\theta^{i}}\in T_{(\theta^{1},\theta^{2})}S$
can also be expressed as
\[
\widetilde{X}_{u}=X^{1}\frac{\partial}{\partial u^{1}}+X^{2}\frac{\partial}{\partial u^{2}}+\frac{1+\alpha}{\sigma}X^{2}\frac{\partial}{\partial u^{3}}+\frac{1+\alpha}{\sigma}X^{1}\frac{\partial}{\partial u^{4}}-\frac{1-\alpha}{2\sigma}X^{1}\frac{\partial}{\partial u^{5}}+\frac{1+2\alpha}{\sigma}X^{2}\frac{\partial}{\partial u^{6}}.
\]
In particular, letting $X=\frac{\partial}{\partial\theta^{1}}$ and
$Y=\frac{\partial}{\partial\theta^{2}}$, we get
\[
\widetilde{X}=\frac{\partial}{\partial u^{1}}+\frac{1+\alpha}{\sigma}\frac{\partial}{\partial u^{4}}-\frac{1-\alpha}{2\sigma}\frac{\partial}{\partial u^{5}},
\]
and
\[
\widetilde{Y}=\frac{\partial}{\partial u^{2}}+\frac{1+\alpha}{\sigma}\frac{\partial}{\partial u^{3}}+\frac{1+2\alpha}{\sigma}\frac{\partial}{\partial u^{6}}.
\]
Thus,
\begin{equation}
\nabla_{X}^{(\alpha)}X=\nabla_{\frac{\partial}{\partial\theta^{1}}}^{(\alpha)}\frac{\partial}{\partial\theta^{1}}=(\Gamma^{(\alpha)})_{11}^{1}\frac{\partial}{\partial\theta^{1}}+(\Gamma^{(\alpha)})_{11}^{2}\frac{\partial}{\partial\theta^{2}}=\frac{1-\alpha}{2\sigma}\frac{\partial}{\partial\theta^{2}},
\end{equation}
\begin{equation}
\nabla_{X}^{(\alpha)}Y=\nabla_{\frac{\partial}{\partial\theta^{1}}}^{(\alpha)}\frac{\partial}{\partial\theta^{2}}=(\Gamma^{(\alpha)})_{12}^{1}\frac{\partial}{\partial\theta^{1}}+(\Gamma^{(\alpha)})_{12}^{2}\frac{\partial}{\partial\theta^{2}}=-\frac{1+\alpha}{\sigma}\frac{\partial}{\partial\theta^{1}},
\end{equation}
 and
\begin{equation}
\nabla_{Y}^{(\alpha)}Y=\nabla_{\frac{\partial}{\partial\theta^{2}}}^{(\alpha)}\frac{\partial}{\partial\theta^{2}}=(\Gamma^{(\alpha)})_{22}^{1}\frac{\partial}{\partial\theta^{1}}+(\Gamma^{(\alpha)})_{22}^{2}\frac{\partial}{\partial\theta^{2}}=-\frac{1+2\alpha}{\sigma}\frac{\partial}{\partial\theta^{2}}.
\end{equation}
On the other hand, consider
\[
\gamma_{1}(t):=(u^{1}+t,u^{2},1,0+\frac{1+\alpha}{\sigma}t,0-\frac{1-\alpha}{2\sigma}t,1)
\]
and
\[
\gamma_{2}(t):=(u^{1},u^{2}+t,1+\frac{1+\alpha}{\sigma}t,0,0,1+\frac{1+2\alpha}{\sigma}t).
\]
It is obvious that
\[
\gamma_{1}(0)=\gamma_{2}(0)=u,\gamma'_{1}(0)=\widetilde{X},\gamma'_{2}\left(0\right)=\widetilde{Y}.
\]
Thus,
\begin{align*}
u(\widetilde{X}_{u}(\theta(\widetilde{X}))) & =u\left(\frac{d}{dt}\biggl{|}_{t=0}\theta(\widetilde{X})\circ\gamma_{1}\right)\\
 & =u\left(\frac{d}{dt}\biggl{|}_{t=0}\left(\begin{bmatrix}1 & \frac{1+\alpha}{\sigma}t\\
-\frac{1-\alpha}{2\sigma}t & 1
\end{bmatrix}^{-1}\begin{bmatrix}1\\
0
\end{bmatrix}\right)\right)\\
 & =u\left(\frac{d}{dt}\biggl{|}_{t=0}\left(\frac{1}{1+\frac{1-\alpha^{2}}{2\sigma^{2}}t^{2}}\begin{bmatrix}1 & -\frac{1+\alpha}{\sigma}t\\
\frac{1-\alpha}{2\sigma}t & 1
\end{bmatrix}\begin{bmatrix}1\\
0
\end{bmatrix}\right)\right)\\
 & =u\left(\frac{d}{dt}\biggl{|}_{t=0}\frac{1}{1+\frac{1-\alpha^{2}}{2\sigma^{2}}t^{2}}\begin{bmatrix}1\\
\frac{1-\alpha}{2\sigma}t
\end{bmatrix}\right)\\
 & =u\left(\begin{bmatrix}0\\
\frac{1-\alpha}{2\sigma}
\end{bmatrix}\right)\\
 & =\frac{1-\alpha}{2\sigma}\frac{\partial}{\partial\theta^{2}}\\
 & =\nabla_{X}^{(\alpha)}X,\tag{6.4}
\end{align*}
\begin{align*}
u(\widetilde{X}_{u}(\theta(\widetilde{Y}))) & =u\left(\frac{d}{dt}\biggl{|}_{t=0}\theta(\widetilde{Y})\circ\gamma_{1}\right)\\
 & =u\left(\frac{d}{dt}\biggl{|}_{t=0}\left(\begin{bmatrix}1 & \frac{1+\alpha}{\sigma}t\\
-\frac{1-\alpha}{2\sigma}t & 1
\end{bmatrix}^{-1}\begin{bmatrix}0\\
1
\end{bmatrix}\right)\right)\\
 & =u\left(\frac{d}{dt}\biggl{|}_{t=0}\left(\frac{1}{1+\frac{1-\alpha^{2}}{2\sigma^{2}}t^{2}}\begin{bmatrix}1 & -\frac{1+\alpha}{\sigma}t\\
\frac{1-\alpha}{2\sigma}t & 1
\end{bmatrix}\begin{bmatrix}0\\
1
\end{bmatrix}\right)\right)\\
 & =u\left(\frac{d}{dt}\biggl{|}_{t=0}\frac{1}{1+\frac{1-\alpha^{2}}{2\sigma^{2}}t^{2}}\begin{bmatrix}-\frac{1+\alpha}{\sigma}t\\
1
\end{bmatrix}\right)\\
 & =u\left(\begin{bmatrix}-\frac{1+\alpha}{\sigma}\\
0
\end{bmatrix}\right)\\
 & =-\frac{1+\alpha}{\sigma}\frac{\partial}{\partial\theta^{1}}\\
 & =\nabla_{X}^{(\alpha)}Y,\tag{6.5}
\end{align*}
and
\begin{align*}
u(\widetilde{Y}_{u}(\theta(\widetilde{Y}))) & =u\left(\frac{d}{dt}\biggl{|}_{t=0}\theta(\widetilde{Y})\circ\gamma_{2}\right)\\
 & =u\left(\frac{d}{dt}\biggl{|}_{t=0}\left(\begin{bmatrix}1+\frac{1+\alpha}{\sigma}t & 0\\
0 & 1+\frac{1+2\alpha}{\sigma}t
\end{bmatrix}^{-1}\begin{bmatrix}0\\
\\
1
\end{bmatrix}\right)\right)\\
 & =u\left(\frac{d}{dt}\biggl{|}_{t=0}\begin{bmatrix}\frac{1}{1+\frac{1+\alpha}{\sigma}t} & 0\\
0 & \frac{1}{1+\frac{1+2\alpha}{\sigma}t}
\end{bmatrix}\begin{bmatrix}0\\
\\
1
\end{bmatrix}\right)\\
 & =u\left(\frac{d}{dt}\biggl{|}_{t=0}\begin{bmatrix}0\\
\frac{1}{1+\frac{1+2\alpha}{\sigma}t}
\end{bmatrix}\right)\\
 & =u\left(\begin{bmatrix}0\\
-\frac{1+2\alpha}{\sigma}
\end{bmatrix}\right)\\
 & =-\frac{1+2\alpha}{\sigma}\frac{\partial}{\partial\theta^{2}}\\
 & =\nabla_{Y}^{(\alpha)}Y.\tag{6.6}
\end{align*}
Formulae from (6.1) to (6.6) verify (5.1) of Theorem 5.8 directly.
Also, since both torsion $T^{(\alpha)}$ and curvature $R^{(\alpha)}$
can be derived by connection $\nabla^{(\alpha)}$, similarly one could
compute to check (5.2) and (5.3). In fact, we have
\[
g(u(\Omega^{(\alpha)}(\widetilde{X},\widetilde{Y})u^{-1}(Y)),Y)=\frac{1-\alpha^{2}}{\sigma^{4}}=R_{1212}^{(\alpha)}
\]
as desired.

These results show how our bundle approach simplifies the calculation
because all operations on the structure group $GL(2,\mathbb{R})$
are easier to handle. Since $GL(n,\mathbb{R})$ is a matrix Lie group,
the right translation and tangent mapping are actually products between
matrices, which are linear and therefore convenient to calculate.

\end{document}